\documentclass[12pt]{amsart}
\usepackage{amsmath, amssymb}
\newtheorem{lemma}{Lemma}[section]
\newtheorem{theorem}{Theorem}[section]

\newtheorem{proposition}{Proposition}[section]
\newtheorem{conjecture}{Conjecture}[section]
\def\proof{\noindent {\bf Proof. \ }}
\def\1bar{\overline{1}}
\def\2bar{\overline{2}}
\def\d{\partial}
\def\CC{\mathbb{C}}
\def\jbar{\overline{j}}
\def\zbar{\overline{z}}
\def\dbar{\overline{\partial}}

\def\bl{\begin{Lem}}
\def\bl{\begin{Lem}}
\def\el{\end{Lem}}
\def\bp{\begin{Pro}}
\def\ep{\end{Pro}}
\def\bt{\begin{Thm}}
\def\et{\end{Thm}}
\def\bc{\begin{Cor}}
\def\ec{\end{Cor}}
\def\bd{\begin{Def}}
\def\ed{\end{Def}}
\def\br{\begin{remark}}
\def\er{\end{remark}}
\def\be{\begin{Exa}}
\def\ee{\end{Exa}}
\def\bpf{\begin{proof}}
\def\epf{\end{proof}}
\def\ben{\begin{enumerate}}
\def\een{\end{enumerate}}
\def\beq{\begin{equation}}
\def\eeq{\end{equation}}

\def\Re{{\sf Re}\,}
\def\Im{{\sf Im}\,}

\def\phi{\varphi}

\def\d{\partial}

\def\cal{\mathcal}
\def\th{\theta}

\begin{document}

\title{Strong Ramadanov Conjecture for  Real Ellipsoids in $\CC^2$: two approaches }

\author{Jan Gregorovic}
\address{Department of Mathematics, University of Ostrava, Czechia  $\&$ Institute of Discrete Mathematics and Geometry, TU Wien, Vienna, Austria}
\email{Jan.Gregorovic@seznam.cz}

\author{Ilya Kossovskiy}
\address{Department of Mathematics, Sustech University, Shenzhen, China  \& International Center of Mathematics, Shenzhen, China}
\email{ilyakos@sustech.edu.cn}

\author{Song-Ying Li}
\address{Department of Mathematics, University of California, Irvine, USA}
\email{sli@uci.edu }

\author{Kevin Shi}
\address{Tarbut $V^{\prime}$Torah  Community Day School, USA}
\email{kevinshi09102@gmail.com}

\begin{abstract}
In this paper, we provide two (significantly different) proofs of the well known Strong Ramadanov Conjecture for the class of real ellipsoids in the complex $2$-space.    
\end{abstract}

\maketitle

\section{Introduction}
Let $D$ be a bounded pseudoconvex domain in $\CC^n$ with smooth boundary. 
 The Fefferman type complex Monge-Amp\'ere equation or Fefferman equation on $D$ is given by
 \begin{equation}\label{F}
 J(\gamma):= -\det
\begin{pmatrix}
\gamma & \partial \gamma \\
(\partial \gamma)^* & H(\gamma)
\end{pmatrix} = 1 \quad \hbox{in } D; \quad \gamma=0 \ \hbox{ on } \d D.
\end{equation}
It was  proved by Fefferman \cite{Fe76} that if $\rho$ with $u=-\log(-\rho)$ being strictly plurisubharmonic  is a solution of (\ref{F}), then it is unique. 
Here, $H(\gamma)$ denotes the complex Hessian matrix of $\gamma$. The existence of the solution was proved by Cheng and Yau
 in \cite{CY80} which gave the existence and uniqueness of K\"ahler-Einstein metric $\sum_{i,j=1}^n u_{i\jbar} dz_i \otimes d\zbar_j$ on $D$. When $D$ is also strictly 
 pseudoconvex, Cheng and Yau proved that $\rho\in C^{n+3/2} (\overline{D})$. The asymptotic expansion formula for the solution $\rho$ given by Lee and Melrose \cite{LM82} is as follows:
 \begin{equation}\label{LM}
 \rho(z)=\rho_0 \sum_{j=0}^\infty c_i ((-\rho_0^{n+1} \log(-\rho_0))^j
 \end{equation}
 where $\rho_0$ is a smooth strictly plurisubharmonic defining function of $D$, with $c_j\in C^\infty (\overline{D})$ and $c_0>0$ on $\d D$.  The equation (\ref{LM}) implies that
 $\rho\not\in C^{n+2}(\overline{D})$ in general, unless $c_1\equiv 0$. Accordingly, the coefficient $c_1$ in the expansion \eqref{LM} (which is the obstruction for having $\rho\in C^{n+2}(\overline{D})$)  is called {\em the obstruction function}. This function vanishes in the case when $D$ is a {\em ball} in $\mathbb C^n$.  This leads to the so-called {\em Strong Ramadanov Conjecture} or {\em obstruction flatness Conjecture}:
 \begin{conjecture} Let $D$ be a bounded strictly pseudoconvex domain in $\CC^n$ with $C^\infty$ boundary. If the Fefferman potential function $\rho \in C^{n+2} (\overline{D})$, then $\d D$ is locally spherical. 
 \end{conjecture} 
 
 The conjecture has been studied by many authors. Curry and Ebenfelt in \cite{CE19} and \cite{CE21} gave some sufficient conditions so that the Strong Ramadanov Conjecture holds, in particular
 for the domain in $\CC^2$ has tranverse symmetry, using CR geometry and Chern-Moser normal form. Several counterexamples for domains in complex manifolds were given by Ebenfelt, Xiao and Xu in \cite{EXX22} and \cite{EXX25}. However, the Strong Ramadanov Conjecture for a smoothly bounded strictly pseudoconvex domain in $\mathbb C^n$ is widely open. Even if $D$ is a  real ellipsoid in $\CC^2$. 

We note that the Strong Ramadanov Conjecture implies the (regular) Ramadanov Conjecture on the sphericity of a domain with the vanishing to infinite order at the boundary log-term in its Fefferman's Bergman kernel expansion. Ramdanov Conjecture has been settled in the $\CC^2$ case: see \cite{BdM,Gr87b} and the discussion in \cite{EXX25}.
 
 In \cite{Fe76}, Fefferman gave  an approximation procedure for a solution of \eqref{F} as follows.  Take any  strictly plurisubharmonic defining function $\rho_0\in C^\infty(\overline{D})$  of $D$. Define
 \begin{equation}
 \rho_1=\rho_0 J(\rho_0)^{-1/(n+1)}.
 \end{equation}
Then 
$
J(\rho_1)=1+E_1(-\rho_1).
$
If $J(\rho_j)=1+E_j(-\rho_j)^j$, define
\begin{equation}
\rho_{j+1}= \rho_{j}\Big(1- {E_{j} (-\rho_{j})^{j}\over (n+1-j)(j+1) }\Big),\quad 1\le j\le n.
\end{equation}
Then
\begin{equation}
J(\rho_{j+1})=1+E_{j+1} (- \rho_{j+1} )^{j+1} ,\quad 1\le j\le n.
\end{equation}
If $E_{n+1}\not\equiv 0$ on $\d D$, one cannot find a smooth function $E_{n+2}$ and a constant $c$ such that $\rho_{n+2}=\rho_{n+1}(1+E_{n+2}(-\rho_{n+1})^{n+1})$
such that $J(\rho_{n+2})=1+O((-\rho_{n+1})^{n+2})$. Instead, one can find a smooth function $E$ such that 
if $\rho_{n+2}=\rho_{n+1}(1+E(-\rho_{n+1})^{n+2} \log(-\rho_{n+1}))$, then $J(\rho_{n+2})=1+O((-\rho_{n+1})^{n+2} \log(-\rho_{n+1}))$. Continuing this process, one can get Lee and
Melrose's asymptotic expansion. Therefore, if the Fefferman potential function $\rho\in C^\infty(\overline{D})$ or $\rho\in C^{n+2}(\overline{D})$, then 
\begin{equation}\label{theA}
{\cal A}=0 \ \ \hbox{on }\ \d D \quad \hbox{with} \quad {\cal A}:=(J(\rho_{n+1})-1)(-\rho_{n+1})^{-n-1}.
\end{equation}
The function $\cal A$ in \eqref{theA} is (up to the constant factor $(n+1)$) precisely the {\em obstruction function} from \eqref{LM}.

As the first main outcome of the current paper, by developing the original Fefferman's approximation procedure, we compute an explicit formula for the obstruction function ${\cal A}$ in terms of a smooth  plurisubharmonic defining function $\gamma$ of $D\subset \CC^2$. As an application, 
we prove the following theorem.
\begin{theorem}\label{main} Let $D$ be a real ellipsoid in $\CC^2$. If the Fefferman potential function $\rho_D$ of $D$ belongs to $C^4(\overline{D})$, then $D=B_2$, the unit ball in $\CC^2$.
\end{theorem}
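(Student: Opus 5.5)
Since $\rho_D\in C^4(\overline D)$ and $n=2$, the discussion around \eqref{theA} shows that the obstruction function ${\cal A}=(J(\rho_3)-1)(-\rho_3)^{-3}$ must vanish identically on $\d D$. The plan is therefore to compute ${\cal A}|_{\d D}$ explicitly for a real ellipsoid and show that it vanishes only for the ball.

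First I will normalize the ellipsoid. After a complex linear change of coordinates, which changes $J$ only by a constant factor $|\det|^2$ absorbed into a rescaling of $\gamma$, any real ellipsoid in $\CC^2$ can be written as
\[
D=\{\gamma<0\},\qquad \gamma=|z_1|^2+|z_2|^2+\Re(a z_1^2+b z_2^2)-1,\qquad a,b\ge 0 .
\]
Here $0\le a,b<1$, and $D$ is a ball exactly when $a=b=0$. The function $\gamma$ is a smooth strictly plurisubharmonic defining function with $H(\gamma)=I$, so $J(\gamma)=|\gamma_1|^2+|\gamma_2|^2-\gamma$ has the simple form
\[
J(\gamma)=|\bar z_1+a z_1|^2+|\bar z_2+bz_2|^2-\gamma .
\]
This is a real quadratic polynomial, and that is what makes the iteration tractable.

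Next I will run Fefferman's procedure for $n=2$. Set $\rho_1=\gamma J(\gamma)^{-1/3}$, then $\rho_2=\rho_1(1-E_1(-\rho_1)/2)$ and $\rho_3=\rho_2(1-E_2\rho_2^2/2)$. Rather than iterating numerically, I will derive a general formula for ${\cal A}|_{\d D}$ in terms of $\gamma$ and its derivatives up to order roughly four. To do this I write $\rho=\gamma e^{\phi}$ with $\phi=\phi_0+\phi_1\gamma+\phi_2\gamma^2$ and expand $J(\gamma e^\phi)$ in powers of $\gamma$. The coefficients of $\gamma^0,\gamma^1,\gamma^2$ then determine $\phi_0,\phi_1,\phi_2$ successively, through the solvable linear equations with factors $(n+1-j)(j+1)$. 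The $\gamma^3$ coefficient is the obstruction, and it is not solvable because $n+1-j=0$. Its restriction to $\d D$ is a universal polynomial expression in the derivatives of $\gamma$ and $J(\gamma)^{-1}$. Since $H(\gamma)=I$ here, many terms drop out. The Hessian of $\log J(\gamma)$ and its derivatives are the main quantities that survive.

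The main obstacle is the last step: evaluating this expression on $\d D$ and extracting a clean contradiction. To keep it manageable I will not verify vanishing everywhere. Instead I will evaluate ${\cal A}$ at special boundary points, for instance $(1/\sqrt{1+a},0)$ and points of the form $(x,0)$, $(iy,0)$ and $(0,\cdot)$, where symmetry $z_j\mapsto -z_j$, $z_j\mapsto\bar z_j$ kills odd terms. At each such point ${\cal A}$ becomes a rational function of $a,b$. I expect that vanishing at two such points gives polynomial identities in $(a,b)$ whose only solution in $[0,1)^2$ is $a=b=0$. A useful sanity check is that for $b=0$ the computation should reduce to the complex ellipsoid case, and for $a=b$ to the symmetric case. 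If the point evaluations prove degenerate, I will instead expand ${\cal A}$ to first order in $(a,b)$ near $0$ and show the linear term is a nonzero multiple of $\Re(a z_1^2+bz_2^2)$. However, this only proves the result near the ball, so the global rational-function evaluation is the step that must be carried through carefully.
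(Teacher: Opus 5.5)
Your outline follows the paper's first proof. You reduce to $\mathcal A\equiv 0$ on $\partial D$, derive a general $n=2$ formula for the obstruction from Fefferman's iteration (the paper's ansatz $J(\gamma)^{-1/3}\gamma\bigl(1+E(-\gamma)+F\gamma^2\bigr)$ is your $\gamma e^{\phi_0+\phi_1\gamma+\phi_2\gamma^2}$ in other notation), normalize to $\gamma=|z|^2+\sum_j A_j\Re z_j^2-1$, and evaluate at special boundary points.

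The genuine gap is that the proposal stops where the proof's content begins. You never compute $\phi_1,\phi_2$ (equivalently the paper's $E=-\frac{1}{12J}\mathcal L_{H(\log J)}(\partial\gamma\otimes\bar\partial\gamma)$ and the corresponding $F$), nor the $\gamma^3$ coefficient. The decisive step, that vanishing at two points forces $a=b=0$, is only an expectation, and two polynomial equations in $(a,b)$ may well have other common zeros in $[0,1)^2$.

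There are two further errors. First, your explicit iteration has the wrong constants: the factor $(n+1-j)(j+1)$ with $n=2$ gives $\rho_2=\rho_1\bigl(1-E_1(-\rho_1)/4\bigr)$ and $\rho_3=\rho_2\bigl(1-E_2\rho_2^2/3\bigr)$. Second, the fallback fails at its first step. On $S^3$ one has $\Re(az_1^2+bz_2^2)=\Re(X\rho)$ for $\rho=|z|^2-1$ and the holomorphic field $X=(az_1^2+bz_2^2)(z_1\partial_{z_1}+z_2\partial_{z_2})$. So to first order in $(a,b)$ the ellipsoids are images of the ball under the flow of $2\Re X$, and the part of $\mathcal A$ linear in $(a,b)$ vanishes identically. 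A perturbative argument would have to go to second order.

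What makes the paper's evaluation conclusive is using the whole curve $\{z_1=0\}\cap\partial D$ instead of isolated points. There $\gamma_1=0$, $J=|\gamma_2|^2$, $E=-A_1^2/(12J)$ and $\partial_1E=0$. Writing $\gamma_2^2=Je^{i\theta}$, the paper arrives at $36J^3\mathcal A=c_0+c_1\cos\theta+c_2\cos 2\theta$ with $c_k$ polynomial in $A_1,A_2$.

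This shape is forced a priori. Give $\gamma,\partial\gamma,\partial^2\gamma$ weights $2,1,0$, corresponding to the scaling $\gamma\mapsto t^{-2}\gamma(t\,\cdot)$. Then the paper's $\mathcal A$, defined as the coefficient of $(-\gamma)^3$, has weight $-6$. The symmetries $z_2\mapsto -z_2$ and $z\mapsto\bar z$ then remove the odd and sine terms. Since $z_2\mapsto \bar z_2+A_2z_2$ is a real-linear isomorphism, $\theta$ sweeps the whole circle along the curve, so each $c_k$ must vanish separately. The paper finds $c_2=68A_1^2A_2^2$, hence $A_1A_2=0$, and then uses $c_0$ to finish. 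Your points $(0,x)$ and $(0,iy)$ correspond only to $\theta=0,\pi$ and give just $c_0+c_2\pm c_1=0$.

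When you carry this out, check parity. The map $z_j\mapsto iz_j$ flips the sign of $A_j$, and for $j=2$ also shifts $\theta$ by $\pi$. So every $c_k$ must be even in $A_1$, with $c_0,c_2$ even and $c_1$ odd in $A_2$. The coefficients printed in the paper fail this test (e.g.\ $3A_1^3$ and $28A_1^2A_2$ in $c_0$, $-16A_1^3A_2$ in $c_1$). Use its arithmetic as a guide to the structure, not as reference values.
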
 
Consequently, the Strong Ramadanov Conjecture  holds for real ellipsoids in $\CC^2$. We note that for ellipsoids sufficiently close to the ball, the result also follows from earlier work of Hirachi \cite{Hi93} and Curry-Ebenfelt \cite{CE19}.

As the second main outcome, we introduce a new technique for computing the obstruction function $\mathcal A$ {\em directly in terms of the defining function $\gamma$}. The approach is based on applying the technique of associated differential equations, used by the first two authors in a number of recent publications (e.g.\cite{divergence,nonminimalODE,nonanalytic,hjy}), and using subsequently the theory of {\em Tresse differential invariants for second order ODEs}. The resulting expression for $\mathcal A$ is cumbersome (which is natural to expect as it involves the $8$-jet of the defining function $\gamma$), however, in the case of real ellipsoids it can be analyzed directly and we show that its identical vanishing implies the sphericity of the ellipsoid. This gives a second, alternative proof of Theorem \ref{main} and The Strong Ramadanov Conjecture for ellipsoids. 

\section{Ramadanov Conjecture via Fefferman iterations}

\subsection{Preliminary and formula for ${\cal A}$}

Let $D$ a bounded strictly pseudoconvex domain in $\mathbb{C}^n$ with smooth boundary $\d D$.
Let $\gamma\in C^\infty(\overline{D})$ be a strictly  plurisubharmonic  defining function for $D$.
Then
\begin{eqnarray*}
J(\gamma)
&= &-\det
\begin{pmatrix}
\gamma & \partial \gamma \\
(\partial \gamma)^* & H(\gamma)
\end{pmatrix}\\
&=& (-\gamma)\det
\begin{pmatrix}
\gamma & \partial \gamma \\
0 & H(\gamma) - \frac{1}{\gamma}(\partial \gamma)^*\partial \gamma
\end{pmatrix}\\
&=& (-\gamma)\det\Big[ H(\gamma) + \frac{1}{-\gamma}\partial\gamma \otimes \bar\partial\gamma \Big].
\end{eqnarray*}
Here,  again,$H(\gamma)$ denotes the complex Hessian matrix of $\gamma$. 

{\bf For the case  $n=2$}, one has
\begin{eqnarray*}
J(\gamma)
&= &(-\gamma)\Big[
(\gamma_{1\bar1} + \tfrac{1}{-\gamma}|\gamma_1|^2)
(\gamma_{2\bar2} + \tfrac{1}{-\gamma}|\gamma_2|^2)  -
(\gamma_{1\bar2} + \tfrac{1}{-\gamma}\gamma_1\bar\gamma_2)
(\gamma_{2\bar1} + \tfrac{1}{-\gamma}\gamma_2\bar\gamma_1)
\Big]\\
&=& (-\gamma)\Big[
\gamma_{1\bar1}\gamma_{2\bar2}
+ \frac{1}{-\gamma}\gamma_{1\bar1}|\gamma_2|^2
+ \frac{1}{-\gamma}\gamma_{2\bar2}|\gamma_1|^2
- \gamma_{1\bar2}\gamma_{2\bar1}
 - \frac{1}{-\gamma}\gamma_{1\bar2}\gamma_2\bar\gamma_1
- \frac{1}{-\gamma}\gamma_{2\bar1}\gamma_1\bar\gamma_2
\Big]\\
&=& (-\gamma)\Big[
\det H(\gamma)+ \frac{1}{-\gamma}\big(
\gamma_{1\bar1}|\gamma_2|^2 + \gamma_{2\bar2}|\gamma_1|^2
\big)
- \frac{1}{-\gamma}\big(
\gamma_{1\bar2}\gamma_2\bar\gamma_1 + \gamma_{2\bar1}\gamma_1\bar\gamma_2
\big)
\Big]\\
&=& (-\gamma)\det H(\gamma)
+ \big[
\gamma_{1\bar1}|\gamma_2|^2 + \gamma_{2\bar2}|\gamma_1|^2
- \gamma_{1\bar2}\gamma_2\bar\gamma_1
- \gamma_{2\bar1}\gamma_1\bar\gamma_2
\big].
\end{eqnarray*}
For any Hermitian matrices $M$ and $N$ be $2\times 2$, we define
\begin{equation}
{\cal L}_N(M)=N_{1\bar1} M_{2\bar 2} +N_{2\bar2} M_{1\bar1} -N_{1\bar2} M_{2, \bar1}-N_{2\bar1} M_{1\bar2}.
\end{equation}
Then
\begin{equation}
{\cal L}_N(M)={\cal L}_M (N)\quad \hbox{and} \quad {\cal L}_M(M)=2\det M.
\end{equation}
This implies that
\begin{proposition} For any real-valued smooth function $g$ on a domain in $ \mathbb{C}^2$, one has
\begin{equation}\label{J1}
J(g)
= -g\det H(g) + {\cal L}_{H(g)}(\d g\times \dbar g).
\end{equation}
\end{proposition}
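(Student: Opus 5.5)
The identity follows from the two-by-two expansion of $J$ already carried out above. We only need to recognize its pieces in the notation ${\cal L}_N(M)$.

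First, expand the bordered determinant along the first row, or equivalently use the Schur-complement factorization written above. This gives, for any real-valued smooth $g$ (with no sign or defining-function assumption needed, since everything is polynomial in the entries),
\[
J(g) = -g\det H(g) + \big[g_{1\bar1}|g_2|^2 + g_{2\bar2}|g_1|^2 - g_{1\bar2}g_2\bar g_1 - g_{2\bar1}g_1\bar g_2\big].
\]
Cofactor expansion of the $3\times 3$ matrix is the cleanest way to get this. It also avoids dividing by $g$, which the Schur-complement computation above does. Concretely, $-\det\begin{pmatrix} g & \d g\\ (\d g)^* & H(g)\end{pmatrix}$ equals $-g\det H(g)$ plus the terms that are bilinear in $\d g$ and $\dbar g$. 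Those bilinear terms are exactly $g_{1\bar1}|g_2|^2+g_{2\bar2}|g_1|^2-g_{1\bar2}\bar g_1 g_2-g_{2\bar1}g_1\bar g_2$, with signs to be checked carefully.

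Second, let $N$ be the rank-one Hermitian matrix $\d g\times\dbar g$, with entries $N_{i\bar j}=g_i\bar g_j$ (and $\bar g_j = g_{\bar j}$ since $g$ is real). Then compute ${\cal L}_{H(g)}(N)$ directly from the definition:
\[
g_{1\bar1}N_{2\bar2}+g_{2\bar2}N_{1\bar1}-g_{1\bar2}N_{2\bar1}-g_{2\bar1}N_{1\bar2}.
\]
This is $g_{1\bar1}|g_2|^2+g_{2\bar2}|g_1|^2-g_{1\bar2}g_2\bar g_1-g_{2\bar1}g_1\bar g_2$, which matches the bracket term.

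The only genuine obstacle is index bookkeeping. One must fix the convention for $(\d g)^*$ and for the ordering in $N_{i\bar j}$ so that the cross terms pair correctly: $g_{1\bar2}$ must pair with $g_2\bar g_1=N_{2\bar1}$. Symmetry of ${\cal L}$ lets us write the result as either ${\cal L}_{H(g)}(N)$ or ${\cal L}_N(H(g))$, so the final formula is insensitive to which argument is placed first.
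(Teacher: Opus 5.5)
Your argument is correct and is essentially the paper's: you expand the bordered determinant and recognize the bilinear bracket as ${\cal L}_{H(g)}(\d g\otimes\dbar g)$; the paper does the expansion via the Schur complement $H(g)+\frac{1}{-g}\,\d g\otimes\dbar g$, while your cofactor expansion has the minor advantage of never dividing by $g$. On the bookkeeping you flag, the pairing $g_{1\bar2}\leftrightarrow g_2\bar g_1$ comes from the standard arrangement, with border row $(g_{\bar 1},g_{\bar 2})$ and border column $(g_1,g_2)^T$ around the block $(g_{i\bar j})$; this is the arrangement the paper's expansion actually uses, whereas a literal reading with $(g_1,g_2)$ on top of the same block gives ${\cal L}_{H(g)}$ of the transposed matrix $(g_j\bar g_i)$ instead, so the convention should be fixed by the definition of $J$ rather than chosen to make the terms match.
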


Let $f$ and $g$ be two real-valued functions. We define a $2\times 2$ Hermitian matrix
\begin{equation}
D(f, g)=\Big[ f_i g_{\jbar}+g_i f_{\jbar}\Big]=\d f\otimes \dbar g+\d g\otimes \dbar f.
\end{equation}
Then
\begin{equation}
D(f, g)=D(g,f) \ \hbox{ and }\ H(f g)=g H(f)+D(f, g)+f H(g).
\end{equation}
Then
\begin{eqnarray*}
H_0(fg):&=&H(fg)-{1\over fg} (\d(fg)\otimes \dbar(fg))\\
&=&fH(g)+gH(f)+D(f,g)\\
&&-{1\over fg}\Big(g^2(\d f\otimes \dbar f)+fg(\d f\otimes \dbar g+\d g \otimes \dbar f)+f^2(\d g\otimes \dbar g)\Big)\\
&=&fH(g)+gH(f)
-{g\over f} ( \d f\otimes \dbar f)-{f\over g} (\d g\otimes \dbar g).
\end{eqnarray*}
Then
\begin{eqnarray*}
\lefteqn{2\det H_0(fg)}\\
&=&{\cal L}_{fH(g)+gH(f)
-{g\over f} ( \d f\otimes \dbar f)-{f\over g} (\d g\otimes \dbar g)} \Big( fH(g)+gH(f)
-{g\over f} ( \d f\otimes \dbar f)-{f\over g} (\d g\otimes \dbar 9)\Big)\\
&=&f {\cal L}_{H(g)} \Big( fH(g)+gH(f)
-{g\over f} ( \d f\otimes \dbar f)-{f\over g} (\d g\otimes \dbar g)\Big)\\
&&+g {\cal L}_{H(f)} \Big( fH(g)+gH(f)
-{g\over f} ( \d f\otimes \dbar f)-{f\over g} (\d g\otimes \dbar g)\Big) \\
&&-{g \over f} {\cal L}_{( \d f\otimes \dbar f)} \Big( fH(g)+gH(f)
-{g \over f} ( \d f\otimes \dbar f)-{f\over g} (\d g\otimes \dbar g)\Big)\\
&&-{f \over g} {\cal L}_{ (\d g\otimes \dbar g)} \Big( fH(g)+gH(f)
-{g \over f} ( \d f\otimes \dbar f)-{f\over g} (\d g\otimes \dbar g)\Big)\\
&=&f^2 2\det H(g) +2fg {\cal L}_{H(g)}(H(f))-2g{\cal L}_{H(g)}(\d f\otimes \dbar f)-2{f^2\over g} {\cal L}_{H(g)}(\d g\otimes \dbar g)\\
&&+g^2 2 \det H(f)-2{g^2\over f} {\cal L}_{H(f)}(\d f\otimes \dbar f)-2f{\cal L}_{H(f)}(\d g \otimes \dbar g)\\
&&+2{\cal L}_{\d f\otimes \dbar f}(\d g\otimes \dbar g)\\
&=&-2{f^2\over g}\Big( {\cal L}_{H(g)}(\d g\otimes \dbar g) -g\det H(g) \Big)
-2{g^2\over f} \Big({\cal L}_{H(f)}(\d f\otimes \dbar f)-f \det H(f)\Big)\\
&&+2fg {\cal L}_{H(g)}(H(f))-2g{\cal L}_{H(g)}(\d f\otimes \dbar f)-2f{\cal L}_{H(f)}(\d g \otimes \dbar g)
+2{\cal L}_{\d f\otimes \dbar f}(\d g\otimes \dbar g).
\end{eqnarray*}
Therefore,
dividing the both sides by $1/2$, using the fact that
$$
J(fg)=-(fg) \det H_0(fg),
$$
and performing some simplifications, we have proved the following theorem.
\begin{theorem}\label{1.1} Let $f$ and $g$ be two functions on a domain in $\mathbb{C}^2$. Then
\begin{eqnarray}\label{1}
J(fg)
&=& f^3 J(g) +g^3 J(f) 
-g^2f^2 {\cal L}_{H(f)}(H(g))+B(f, g),
\end{eqnarray}
where
\begin{eqnarray}
B(f, g)&=&fg^2 {\cal L}_{H(g)}(\d f\otimes \dbar f)+f^2 g  {\cal L}_{H(f)}(\d g\otimes \dbar g) \nonumber\\
&&-fg {\cal L}_{\d f\otimes \dbar f}(\d g\otimes \dbar g).
\end{eqnarray}
\end{theorem}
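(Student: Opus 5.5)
The plan is to derive \eqref{1} directly from the expansion of $2\det H_0(fg)$ displayed just above the statement, combined with the identity $J(\gamma)=-\gamma\det H_0(\gamma)$. Here
\[
H_0(\gamma)=H(\gamma)-\tfrac{1}{\gamma}\,\d\gamma\otimes\dbar\gamma .
\]
This identity is exactly the block-determinant computation at the start of this subsection, specialised to $n=2$ and applied with $\gamma=fg$.

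\textbf{Step 1: algebraic properties of ${\cal L}$.} I first record the properties of ${\cal L}$ that the expansion uses.
\begin{itemize}
\item ${\cal L}_N(M)$ is bilinear and symmetric in $(M,N)$, with ${\cal L}_M(M)=2\det M$. This lets $2\det$ of a sum of four Hermitian matrices expand as $\sum_{a,b}c_ac_b{\cal L}_{M_a}(M_b)$.
\item For a rank-one matrix $\d f\otimes\dbar f$ we have ${\cal L}_{\d f\otimes\dbar f}(\d f\otimes\dbar f)=2\det(\d f\otimes\dbar f)=0$. This is why no terms quadratic in $\d f\otimes\dbar f$ alone, or in $\d g\otimes\dbar g$ alone, survive in the expansion.
\end{itemize}
Then I regroup the terms containing $f^2/g$ and $g^2/f$ using the Proposition, i.e. \eqref{J1}:
\[
{\cal L}_{H(g)}(\d g\otimes\dbar g)-g\det H(g)=J(g),
\]
and similarly for $f$. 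This gives
\[
\det H_0(fg)=-\frac{f^2}{g}J(g)-\frac{g^2}{f}J(f)+fg\,{\cal L}_{H(g)}(H(f))-g\,{\cal L}_{H(g)}(\d f\otimes\dbar f)-f\,{\cal L}_{H(f)}(\d g\otimes\dbar g)+{\cal L}_{\d f\otimes\dbar f}(\d g\otimes\dbar g).
\]

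\textbf{Step 2: multiply through.} Multiplying by $-fg$ turns the first two terms into $f^3J(g)+g^3J(f)$ and the third into $-f^2g^2{\cal L}_{H(f)}(H(g))$. For the latter I use the symmetry ${\cal L}_{H(g)}(H(f))={\cal L}_{H(f)}(H(g))$. The remaining three terms become
\[
fg^2{\cal L}_{H(g)}(\d f\otimes\dbar f)+f^2g\,{\cal L}_{H(f)}(\d g\otimes\dbar g)-fg\,{\cal L}_{\d f\otimes\dbar f}(\d g\otimes\dbar g),
\]
which is exactly $B(f,g)$.

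The only real obstacle is bookkeeping in the sixteen-term bilinear expansion behind the displayed computation. I would double-check three points there:
\begin{itemize}
\item Each mixed pair appears twice, giving the factor $2$ in, e.g., $2fg\,{\cal L}_{H(g)}(H(f))$.
\item The cross term between $-\tfrac gf\,\d f\otimes\dbar f$ and $-\tfrac fg\,\d g\otimes\dbar g$ contributes $+2{\cal L}_{\d f\otimes\dbar f}(\d g\otimes\dbar g)$, since the product of the two coefficients is $1$ and the sign is positive.
\item The terms $-2g{\cal L}_{H(g)}(\d f\otimes\dbar f)$ and $-2f{\cal L}_{H(f)}(\d g\otimes\dbar g)$ carry the stated coefficients.
\end{itemize}
The identity is valid wherever $f,g\neq0$ because of the divisions, and it extends everywhere by continuity since both sides are polynomial in derivatives of $f,g$.
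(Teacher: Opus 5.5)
Your proposal is correct and follows the paper's own argument. You write $J(fg)=-fg\det H_0(fg)$ with $H_0(fg)=fH(g)+gH(f)-\frac{g}{f}\d f\otimes\dbar f-\frac{f}{g}\d g\otimes\dbar g$, expand $2\det H_0(fg)$ bilinearly via ${\cal L}$, fold the $f^2/g$ and $g^2/f$ terms into $J(g)$ and $J(f)$ using \eqref{J1}, and multiply by $-fg$; your closing extension across $\{fg=0\}$, which the paper does not discuss, should be phrased as a polynomial identity in the jet variables rather than as continuity, since continuity alone does not cover the case where $fg$ vanishes on an open set.
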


Next, notice that
\begin{eqnarray}
{\cal L}_{\d f\otimes \dbar f}(\d g\otimes \dbar g)
&=&|f_1|^2 |g_2|^2+|f_2|^2 |g_1|^2-2\hbox{Re} f_1 f_{\bar 2} g_2 g_{\bar1}\nonumber\\
&=&|f_1 g_2-f_2 g_1|^2.
\end{eqnarray}
Now we will take
\begin{equation}
f(z)=J(\gamma)^{-1/3},\quad g(z)=\gamma(1+E(-\gamma)+F \gamma^2).
\end{equation}
Then
$$
\d f\otimes \dbar f={1\over 9 J^{8/3}} \d J\otimes \dbar J,
$$
\begin{eqnarray}
fg {\cal L}_{\d f\otimes \dbar f}(\d g\otimes \dbar g)
&=&{g\over 9 J^3} |J_1 g_2-J_2 g_1|^2
\end{eqnarray}
and
$$
H(f)=-{1\over 3 J^{4/3}} (H(J)-{4\over 3J}\d J\otimes \dbar J) .
$$
Then
\begin{eqnarray} \label{f}
f H(f)-\d f\otimes \dbar f
&=-&{1\over 3 J^{5/3}}(H(J)-{4\over 3 J}\d J\otimes \dbar J)-{1\over 9 J^{8/3}} \d J\otimes \dbar J\nonumber\\
&=&-{1\over 3 J^{5/3}}H(J)+{1\over 3 J^{8/3}}\d J\otimes \dbar J
\end{eqnarray}
and
\begin{eqnarray*}
\lefteqn{f^2 g{\cal L}_{H(f)}(\d g\otimes \dbar g)-fg{\cal L}_{\d f\otimes \dbar f}(\d g\otimes \dbar g)}\\
&=& f g {\cal L}_{ f H(f)-\d f\otimes \dbar f}(\d g\otimes \dbar g)\\
&=&-{g\over 3 J^2} {\cal L}_{H(J)} (\d g\otimes \dbar g)+{g \over 3J^3} {\cal L}_{\d J\otimes \dbar J}(\d g\otimes \dbar g).
\end{eqnarray*}
Moreover,
\begin{eqnarray*}
\lefteqn{-f^2 g^2 {\cal L}_{H(f)}(H(g))+f g^2{\cal L}_{H(g)}(\d f\otimes\dbar f)}\\
&=&-f g^2 \Big({\cal L}_{H(g)}(f H(f))- {\cal L}_{H(g)}(\d f\otimes \dbar f))\\
&=&  g^2 \Big({1\over 3J^2} {\cal L}_{H(g)}(H(J))-{1\over 3 J^3} {\cal L}_{H(g)}(\d J\otimes \dbar J)\Big).
\end{eqnarray*}
Therefore,
\begin{eqnarray*}
\lefteqn{B(f, g)}\\
&=&-f^2 g^2 {\cal L}_{H(f)}(H(g))+f g^2{\cal L}_{H(g)}(\d f\otimes\dbar f)+f^2 g{\cal L}_{H(f)}(\d g\otimes \dbar g)-fg{\cal L}_{\d f\otimes \dbar f}(\d g\otimes \dbar g)\\
&=&  {g^2\over 3J^2}  \Big[{\cal L}_{H(g)}(H(J))-{1\over J} {\cal L}_{H(g)}(\d J\otimes \dbar J)\Big]-{g\over 3 J^2}
\Big[ {\cal L}_{H(J)} (\d g\otimes \dbar g)-{1 \over J} {\cal L}_{\d J\otimes \dbar J}(\d g\otimes \dbar g)\Big].
\end{eqnarray*}
By (\ref{f}), one has
\begin{eqnarray*}
J(f)&=&-f \det H_0(f)=-f \det \Big(-{1\over 3 J^{4/3}} H(J)+{1\over 3 J^{7/3}} \d J \otimes \dbar J\Big)\\
&=&-{1\over  9J^3} \det \Big(H(J)-{1\over J} \d J\otimes \dbar J\Big).
\end{eqnarray*}
This gives
\begin{proposition} Let $f=J^{-1/3}$. Then
\begin{eqnarray}
J(f) 
&=&-{1\over  9J^3} \det \Big(H(J)-{1 \over J} \d J\otimes \dbar J\Big).
\end{eqnarray}
\end{proposition}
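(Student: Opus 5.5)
The plan is to compute $J(f)$ for $f=J^{-1/3}$ (where $J=J(\gamma)>0$) directly from the factored form of the Fefferman operator. We use
$$
J(f)=(-f)\det\Big[H(f)+\tfrac{1}{-f}\,\d f\otimes\dbar f\Big]=-f\det H_0(f),
\qquad H_0(f):=H(f)-\tfrac1f\,\d f\otimes\dbar f .
$$
This identity holds wherever $f\neq 0$. Here $f>0$, and the matrix identity derived at the start of this subsection is purely algebraic, so the same determinant manipulation applies with $\gamma$ replaced by $f$. Everything then reduces to expressing $H_0(f)$ in terms of $H(J)$ and $\d J\otimes\dbar J$.

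For the first step we differentiate $f=J^{-1/3}$. This gives $\d f=-\tfrac13 J^{-4/3}\d J$, hence $\d f\otimes\dbar f=\tfrac19 J^{-8/3}\d J\otimes\dbar J$. It also gives
$$
H(f)=-\tfrac13 J^{-4/3}\Big(H(J)-\tfrac{4}{3J}\d J\otimes\dbar J\Big),
$$
which is the formula already recorded above. Next we form $H_0(f)=H(f)-J^{1/3}\,\d f\otimes\dbar f$. The second term equals $\tfrac19 J^{-7/3}\d J\otimes\dbar J$. Adding the two coefficients of $\d J\otimes\dbar J$, namely $\tfrac49 J^{-7/3}-\tfrac19 J^{-7/3}=\tfrac13 J^{-7/3}$, we get
$$
H_0(f)=-\tfrac{1}{3J^{4/3}}\Big(H(J)-\tfrac1J\,\d J\otimes\dbar J\Big).
$$
This is consistent with (\ref{f}) after dividing that identity by $f$.

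Finally, since we are in dimension $n=2$, the determinant of a $2\times 2$ matrix scales quadratically. Therefore $\det H_0(f)=\tfrac19 J^{-8/3}\det\big(H(J)-J^{-1}\d J\otimes\dbar J\big)$. Multiplying by $-f=-J^{-1/3}$ yields the claimed factor $-\tfrac{1}{9J^3}$.

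The only real obstacle is bookkeeping of the powers of $J$ and of the coefficient of $\d J\otimes\dbar J$, which must combine as $\tfrac49-\tfrac19=\tfrac13$. Both are settled by the explicit computation above.
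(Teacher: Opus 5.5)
Your proof is correct and is essentially the paper's argument. The paper likewise writes $J(f)=-f\det H_0(f)$ and gets $H_0(f)=-\frac{1}{3J^{4/3}}\big(H(J)-\frac1J\,\d J\otimes\dbar J\big)$ by dividing (\ref{f}) by $f$; it then uses the quadratic scaling of the $2\times 2$ determinant to obtain the factor $-\frac{1}{9J^3}$.
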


Now, by Theorem \ref{1.1}, with $J=J(\gamma)$, one has
\begin{eqnarray}\label{J(fg)}
J(f g)&=&{1\over J} J(g) +g^3 J(f)+B(f, g) \nonumber\\
&=&{J(g) \over J} -{g^3\over 9 J^3} \det \Big(H(J)-{1\over J}\d J\otimes \dbar J\Big) \nonumber \\
&&+ {g^2\over 3J^2}  \Big({\cal L}_{H(g)}(H(J))-{1\over J} {\cal L}_{H(g)}(\d J\otimes \dbar J)\Big) \nonumber\\
&&-{g\over 3 J^2}
\Big( {\cal L}_{H(J)} (\d g\otimes \dbar g)-{1 \over J} {\cal L}_{\d J\otimes \dbar J}(\d g\otimes \dbar g)\Big) \nonumber\\
&=&{J(g) \over J} -{g^3\over 9 J^3} \det \Big(H(J)-{1\over J}\d J\otimes \dbar J\Big) \nonumber \\
&&- {g\over 3J^2}  {\cal L}_{H(J)}( -g H(g)+\d g\otimes \dbar g) \nonumber\\
&&+{g\over 3 J^3}
{\cal L}_{\d J\otimes \dbar J}(\d g\otimes \dbar g -g H(g)).
\end{eqnarray}

Next define
\begin{eqnarray}
g(z)&=&\gamma(1+E (-\gamma)+F\gamma^2).
\end{eqnarray}
Then
\begin{equation}
g_i=a \gamma_i- (-\gamma)^2(E_i +F_i (-\gamma)),
\end{equation}
where
\begin{eqnarray}
a=(1+2E(-\gamma) +3F\gamma^2).
\end{eqnarray}
We shall prove
\begin{proposition} 
\begin{eqnarray}
\qquad \d g\otimes \dbar g
&=&a^2  \d \gamma \otimes \dbar \gamma -(\gamma)^2  D(E,\gamma)\nonumber\\
&&\quad  -(-\gamma)^3 ( 2E D(E, \gamma)+ D(F, \gamma) )+\gamma^4 A_1.
\end{eqnarray}
\end{proposition}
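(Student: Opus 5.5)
The plan is a direct expansion of $g_i\, g_{\jbar}$ from the formula for $g_i$, sorting the terms by powers of $\gamma$. Everything of order $\gamma^4$ or higher goes into a smooth Hermitian matrix $A_1$, which I will write out explicitly so it is clear that it is smooth on $\overline D$.

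First I rewrite the gradient so that the powers of $\gamma$ are explicit. From $g_i=a\gamma_i-(-\gamma)^2(E_i+F_i(-\gamma))$ we get
\[
g_i=a\gamma_i-\gamma^2E_i+\gamma^3F_i .
\]
Since $E$, $F$, $\gamma$ are real, $g_{\jbar}=a\gamma_{\jbar}-\gamma^2E_{\jbar}+\gamma^3F_{\jbar}$. Multiplying out and using $D(f,h)=\d f\otimes\dbar h+\d h\otimes\dbar f$ gives
\[
\d g\otimes\dbar g=a^2\,\d\gamma\otimes\dbar\gamma-a\gamma^2D(E,\gamma)+a\gamma^3D(F,\gamma)+\gamma^4\big(\d E\otimes\dbar E-\gamma D(E,F)+\gamma^2\,\d F\otimes\dbar F\big).
\]

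Next I expand the coefficient $a$ in the two middle terms. We have $a=1-2E\gamma+3F\gamma^2$. Then
\[
-a\gamma^2D(E,\gamma)=-\gamma^2D(E,\gamma)+2E\gamma^3D(E,\gamma)-3F\gamma^4D(E,\gamma),
\]
\[
a\gamma^3D(F,\gamma)=\gamma^3D(F,\gamma)+\gamma^4(-2E+3F\gamma)D(F,\gamma).
\]
Collecting the cubic terms gives $\gamma^3\big(2ED(E,\gamma)+D(F,\gamma)\big)$. Because $\gamma^3=-(-\gamma)^3$, this is exactly $-(-\gamma)^3\big(2ED(E,\gamma)+D(F,\gamma)\big)$, as claimed. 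The quadratic term is $-\gamma^2D(E,\gamma)$, which matches the $-(\gamma)^2D(E,\gamma)$ in the statement. The leading term $a^2\,\d\gamma\otimes\dbar\gamma$ is deliberately left unexpanded, as in the statement.

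Finally I define
\[
A_1=\d E\otimes\dbar E-\gamma D(E,F)+\gamma^2\,\d F\otimes\dbar F-3FD(E,\gamma)+(3F\gamma-2E)D(F,\gamma).
\]
This is a smooth Hermitian matrix whenever $E,F,\gamma$ are smooth, which completes the identity.

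There is no real obstacle here. The only point needing care is sign bookkeeping: converting between powers of $\gamma$ and of $(-\gamma)$, and confirming that the $\gamma^3$ coefficient of $a$-times-$D(E,\gamma)$ combines with the $D(F,\gamma)$ term with the stated signs. I would double-check that step against the scalar case $n=1$, where $D(E,\gamma)=2E'\gamma'$.
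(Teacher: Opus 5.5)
Your proposal is correct and matches the paper's proof: both expand $\d g\otimes\dbar g$ from $g_i=a\gamma_i-\gamma^2E_i+\gamma^3F_i$, expand $a=1-2E\gamma+3F\gamma^2$ in the cross term $-a\gamma^2\big(D(E,\gamma)-\gamma D(F,\gamma)\big)$ to isolate the $\gamma^2$ and $\gamma^3$ terms, and absorb everything of order $\gamma^4$ into $A_1$. The only difference is that you write $A_1$ out explicitly, whereas the paper leaves it implicit.
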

\proof  Since $a=1+2E(-\gamma)+3F\gamma^2$, one has
\begin{eqnarray*}
\d g \otimes \dbar g &=& (a \d \gamma -(-\gamma)^2(\d E, +(-\gamma) \d F) (a\dbar \gamma -(\gamma)^2 (\dbar E+(-\gamma) \dbar F)\\
&=&a^2  \d \gamma \otimes \dbar \gamma -(\gamma)^2 a (D(E, \gamma)+(-\gamma) D(F, \gamma))\\
&&+\gamma^4 (\d E+(-\gamma) \d F \otimes (\dbar E+(-\gamma)\dbar F)\\
&=&a^2 \d \gamma \otimes \dbar \gamma 
-(\gamma)^2  D(E,\gamma) -(-\gamma)^3 D(F, \gamma) -(-\gamma)^3 2E D(E, \gamma) +\gamma^4 A_1.
\end{eqnarray*}
This gives the proof of the proposition. 

Since
\begin{eqnarray}
g_{i\jbar}&=&(\gamma- E \gamma^2+F \gamma^3) _{i\jbar} \nonumber\\
&=& (a \gamma_i -(-\gamma)^2 (E_i+(-\gamma) F_i)_{\jbar} \nonumber\\
&=&  a\gamma_{i\jbar}
 +\gamma_i a_{\jbar} +(-\gamma) (2 E_i +3(-\gamma) F_i) \gamma_{\jbar}
-(-\gamma)^2 (E_{i\jbar} +(-\gamma) F_{i\bar j} -F_i \gamma_{\jbar} ) \nonumber\\
&=& a \gamma_{i\jbar} -(-\gamma)^2 (E_{i\jbar}+(-\gamma) F_{i\jbar}) +(-\gamma) \Big(2 E_i \gamma_{\jbar}+(-\gamma) 3F_i \gamma_{\jbar}\Big)\nonumber\\
&&+ \gamma_i ( 2E_{\jbar} (-\gamma) -2E\gamma_{\jbar}+F_{\jbar} 3(-\gamma)^2 -6(-\gamma) F \gamma_{\jbar})\nonumber\\
&=& a \gamma_{i\jbar} -(-\gamma)^2( E_{i\jbar}+(-\gamma) F_{i\jbar})\nonumber\\
&& +(-\gamma) \Big(2D(E, \gamma)+3(-\gamma) D(F, \gamma)\Big)
-(2E +6(-\gamma)F) \gamma_i \gamma_{\jbar}.\nonumber
\end{eqnarray}
Therefore,
\begin{eqnarray}\label{For1}
H(g)&=&a H(\gamma)-\Big( 2E+6F(-\gamma)\Big)  \d \gamma \otimes \dbar \gamma\nonumber
\\
&&
+2 (-\gamma) D(E, \gamma)+3 \gamma^2 D(F, \gamma)
-  \gamma^2 H(E) -(-\gamma)^3 H(F).\quad
\end{eqnarray}
Notice that
\begin{eqnarray*}
a g&=&\gamma (1+E(-\gamma)+F\gamma^2) (1+2E(-\gamma)+3F\gamma^2)\\
&=&\gamma(1+3 E(-\gamma)+(2E^2+4F)\gamma^2+5EF (-\gamma)^3+3F^2 \gamma^4),
\end{eqnarray*}
\begin{eqnarray*}
\lefteqn{g (E+3F(-\gamma))+a^2}\\
&=&-(-\gamma) (1+E (-\gamma)+F\gamma^2)(E+3F(-\gamma))+(1+2E(-\gamma)+3F\gamma^2)^2\\
&=&1+2(2E(-\gamma)+3F\gamma^2)+(-\gamma)^2(2E+3F(-\gamma))^2\\
&&-(-\gamma)\Big(E+3F(-\gamma) +E^2(-\gamma) +3E F(-\gamma)^2+EF (-\gamma)^2+3F^2(-\gamma)^3\Big)\\
&=&1+3E(-\gamma)+3(F+E^2) \gamma^2 +8EF (-\gamma)^3+6F^2\gamma^4
\end{eqnarray*}
and
$$
-g (-\gamma)-\gamma^2-2E (-\gamma)^3= -E (-\gamma)^3+F(-\gamma)^4.
$$
Then
\begin{eqnarray*}
\lefteqn{{1\over 2} (-g) H(g)+\d g \otimes \dbar g}\\
&=&-{g \over 2} a H(\gamma)+g \Big( E+3F(-\gamma)\Big)  \d \gamma \otimes \dbar \gamma\nonumber
\\
&&
-g (-\gamma) D(E, \gamma)-{3  g\over 2} \gamma^2 D(F, \gamma)
+{g \over 2}  \gamma^2 H(E) +{g\over 2} (-\gamma)^3 H(F)\\
&&+a^2 \d \gamma \otimes \dbar \gamma - (\gamma)^2  D(E,\gamma) -(-\gamma)^3 ( 2E D(E, \gamma)+ D(F, \gamma) )+\gamma^4 A_1\\
&=&{(-\gamma)  \over 2} (1+3 E(-\gamma) +(4F+2E^2)  \gamma^2) H(\gamma)\nonumber
\\
&&
-(-\gamma)^3 E D(E, \gamma)+{  (-\gamma) ^3 \over 2} D(F, \gamma)
-{(-\gamma)^3 \over 2}  H(E) \\
&&+\Big(1+3E(-\gamma)+3(F+E^2) \gamma^2 +8EF (-\gamma)^3\Big)\d \gamma \otimes \dbar \gamma +\gamma^4 A_2\\
&=&{(-\gamma)  \over 2} b  H(\gamma)\nonumber
-(-\gamma)^3 E D(E, \gamma)+{  (-\gamma) ^3 \over 2} D(F, \gamma)
-{(-\gamma)^3 \over 2}  H(E) +c \d \gamma \otimes \dbar \gamma +\gamma^4 A_2,
\end{eqnarray*}
where
\begin{equation}
b= 1+3 E(-\gamma) +(4F+2E^2)  \gamma^2
\end{equation}
and
\begin{equation}
c=1+3E(-\gamma)+3(F+E^2) \gamma^2 +8EF (-\gamma)^3.
\end{equation}
Then
\begin{equation}
c=b+(E^2-F)\gamma^2+8EF (-\gamma)^3
\end{equation}
Notice that
$$
{\cal L}_{\d\gamma\otimes \dbar \gamma}(\d \gamma\otimes \dbar \gamma)=0,
\quad {\cal L}_{\d \gamma\otimes \dbar \gamma}(D(E, \gamma))= {\cal L}_{\d \gamma\otimes \dbar \gamma}(D(F, \gamma))=0,
$$

Then
\begin{eqnarray*}
J(g)&=&-g \det H(g)+{\cal L}_{H(g)}(\d g\otimes \dbar g)\\
&=&{\cal L}_{H(g)}(-{g \over 2} H(g)+\d g\otimes \dbar g)\\
&=&{(-\gamma)  \over 2} b {\cal L}_{H(g)}(H(\gamma))\nonumber
\\
&&
-(-\gamma)^3 E{\cal L}_{H(g)} D(E, \gamma)+{  (-\gamma) ^3 \over 2} {\cal L}_{H(g)} D(F, \gamma)
-{(-\gamma)^3 \over 2}  {\cal L}_{H(g)} (H(E)) \\
&&+c  {\cal L}_{H(g)} (\d \gamma \otimes \dbar \gamma) +\gamma^4 {\cal L}_{H(g)}(A_2)
\\
&=&{(-\gamma)  \over 2} b  a {\cal L}_{H(\gamma)}(H(\gamma))\nonumber
-2{(-\gamma)\over 2} b (E+3F(-\gamma)) {\cal L}_{H(\gamma)}( \d \gamma \otimes\dbar \gamma) \nonumber\\
&&+b {(-\gamma)\over 2}2(-\gamma) {\cal L}_{H(\gamma)}(D(E, \gamma)) +b{(-\gamma)^3\over 2}\Big(3 {\cal L}_{H(\gamma)}(D(F,\gamma))-{\cal L}_{H(\gamma)}(H(E)\Big)\\
&&
-(-\gamma)^3 E {\cal L}_{H(\gamma)} D(E, \gamma)+{  (-\gamma) ^3 \over 2} {\cal L}_{H(\gamma)} (D(F, \gamma))
-{(-\gamma)^3 \over 2}  {\cal L}_{H(\gamma)-2E \d \gamma \otimes \dbar \gamma } (H(E)) \\
&&+ a (b+(E^2-F)\gamma^2+8EF(-\gamma)^3)  {\cal L}_{H(\gamma)} (\d \gamma \otimes \dbar \gamma)\\
&&-(1+3E(-\gamma)) \gamma^2 {\cal L}_{H(E)}(\d \gamma\otimes \dbar \gamma)-(-\gamma)^3 {\cal L}_{H(F)}(\d \gamma \otimes \dbar \gamma) +O(\gamma^4 ).
\end{eqnarray*}

Notice that 
\begin{eqnarray*}
\lefteqn{-{(-\gamma)^3\over 2} {\cal L}_{H(\gamma)-2E\d \gamma\otimes \dbar \gamma } (H(E))
+(1+3E(-\gamma) )\gamma^2{\cal L}_{\d \gamma \otimes\dbar \gamma}(H(E))}\\
&=&-{(-\gamma)^3\over 2} {\cal L}_{H(\gamma)}(H(E))+(1+2E(-\gamma))\gamma^2 {\cal L}_{\d \gamma \otimes\dbar \gamma}(H(E))
\end{eqnarray*}
and
$$
ab {(-\gamma) \over 2} {\cal L}_{H(\gamma)}(H(\gamma))+ab {\cal L}_{H(\gamma)}(\d \gamma\otimes
\dbar \gamma)
=ab J(\gamma).
$$
Therefore,
\begin{eqnarray*}
\lefteqn{J(g)}\\&=& b  a J(\gamma)+ \Big[a ((E^2-F)\gamma^2+8EF(-\gamma)^3)  
-(-\gamma) b (E+3F(-\gamma))\Big) (J-(-\gamma) \det H(\gamma))  \nonumber\\
&&+(1+3E(-\gamma))  \gamma^2 {\cal L}_{H(\gamma)}(D(E, \gamma)) +{(-\gamma)^3\over 2}\Big(3 {\cal L}_{H(\gamma)}(D(F,\gamma))-{\cal L}_{H(\gamma)}(H(E)\Big]\\
&&
-(-\gamma)^3 {\cal L}_{H(\gamma)} D(E, \gamma)+{  (-\gamma) ^3 \over 2} {\cal L}_{H(\gamma)} (D(F, \gamma))
-{(-\gamma)^3 \over 2}  {\cal L}_{H(\gamma)} (H(E)) \\
&&-(1+2E(-\gamma)) \gamma^2 {\cal L}_{H(E)}(\d \gamma\otimes \dbar \gamma)-(-\gamma)^3 {\cal L}_{H(F)}(\d \gamma \otimes \dbar \gamma) +O(\gamma^4 ).
\end{eqnarray*}
Notice that
\begin{eqnarray*}
ab&=&(1+2E(-\gamma)+3F\gamma^2)(1+3E(-\gamma)+(4F+2E^2)\gamma^2)\\
&=&1+5E(-\gamma)+(7F+8E^2)\gamma^2+E(17F+4E^2)(-\gamma)^3+O(\gamma^4)
\end{eqnarray*}
and
\begin{eqnarray*}
\lefteqn{ a ((E^2-F)\gamma^2+8EF(-\gamma)^3)   -(-\gamma) b (E+3F(-\gamma)) }\\
&=&-bE (-\gamma)+a(E^2-F)\gamma^2-3bF \gamma^2+8 EF(-\gamma)^3+O(\gamma^4) \\
&=&-(1+3E(-\gamma)+(4F+2E^2)\gamma^2) E (-\gamma)+(1+2E(-\gamma))(E^2-F)\gamma^2\\
&&-3(1+3E(-\gamma)) F \gamma^2+8 EF(-\gamma)^3+O(\gamma^4) \\
&=&-E(-\gamma)-2(E^2 +2F)\gamma^2-7EF(-\gamma)^3+O(\gamma^4).
\end{eqnarray*}
Therefore,
\begin{eqnarray*}
J(g)&=&\Big( b  a - \Big(E(-\gamma)+2(E^2+2F)\gamma^2+7EF(-\gamma)^3\Big) J(\gamma)\\
&&+ (-\gamma) \Big(E(-\gamma)+2(E^2+2F)\gamma^2+7EF(-\gamma)^3\Big)\det H(\gamma) \\
&&+\Big(\gamma^2)+(-\gamma)^3 2E) \Big){\cal L}_{H(\gamma)} (D(E, \gamma))+2  (-\gamma) ^3 {\cal L}_{H(\gamma)} (D(F, \gamma))
 \\
&&-(1+2E(-\gamma)) \gamma^2 {\cal L}_{H(E)}(\d \gamma\otimes \dbar \gamma)-(-\gamma)^3 {\cal L}_{H(F)}(\d \gamma \otimes \dbar \gamma) +O(\gamma^4 )\\
&=&\Big( 1+4E(-\gamma)+ 3(2E^2+F)\gamma^2+E(10F+4E^2)(-\gamma)^3\Big) J(\gamma)\\
&&+ (-\gamma) \Big(E(-\gamma)+2(E^2+2F)\gamma^2+7EF(-\gamma)^3\Big)\det H(\gamma) \\
&&+\Big( \gamma^2+(-\gamma)^3 2E\Big)  {\cal L}_{H(\gamma)} (D(E, \gamma))+2  (-\gamma) ^3 {\cal L}_{H(\gamma)} (D(F, \gamma))-(\gamma)^3{\cal H(\gamma}(H(E))
 \\
&&-(1+2E(-\gamma)) \gamma^2 {\cal L}_{H(E)}(\d \gamma\otimes \dbar \gamma)-(-\gamma)^3 {\cal L}_{H(F)}(\d \gamma \otimes \dbar \gamma) +O(\gamma^4 ).
\end{eqnarray*}
Therefore,
\begin{eqnarray}\label{J(j)/J}
{J(g)\over J} 
&=&1  +(-\gamma)(4E) +I_2(-\gamma)^2+I_3(-\gamma)^3+O(\gamma^4),
\end{eqnarray}
where
$$
I_2={1\over J} \Big((3F+6E^2) J+E \det H(\gamma) +{\cal L}_{H(\gamma)}(D(E, \gamma)-{\cal L}_{\d \gamma \otimes \dbar \gamma}(H(E))\Big)
$$
and
\begin{eqnarray*}
I_3&=&{1\over J} \Big[E(10F+4E^2)J +(4F+2E^2)\det H(\gamma)+2E{\cal L}_{H(\gamma)}(D(E, \gamma)) \\
&&+  2  {\cal L}_{H(\gamma)} ( D(F, \gamma) ) ) -{\cal L}_{H(\gamma)}(H(E)) -2E {\cal L }_{\d \gamma\otimes \dbar \gamma}(H(E)) 
-{\cal L}_{\d \gamma \otimes \dbar \gamma}(H(F)) \Big]+O(\gamma^4 ).
\end{eqnarray*}

Since
\begin{eqnarray*}
\lefteqn{\d g\otimes \dbar g- gH(g)}\\
&=&a^2  \d \gamma \otimes \dbar \gamma -(\gamma)^2  D(E,\gamma) -(-\gamma)^3 ( 2E D(E, \gamma)+ D(F, \gamma) )+\gamma^4 A_1\\
&&- g\Big(a H(\gamma)-2\Big( E+3F(-\gamma) \Big)  \d \gamma \otimes \dbar \gamma) +2 (-\gamma) D(E, \gamma)\Big)
\\
&&-g \Big(3 \gamma^2 D(F, \gamma)
-  \gamma^2 H(E) -(-\gamma)^3 H(F)\Big)\\
&=&a^2  \d \gamma \otimes \dbar \gamma -(\gamma)^2  D(E,\gamma) +O(\gamma^3)\\
&&- g\Big(a H(\gamma)-2E \d \gamma \otimes \dbar \gamma -6F(-\gamma)  \d \gamma \otimes \dbar \gamma) +2 (-\gamma) D(E, \gamma)\Big).
\end{eqnarray*}
We get that
\begin{eqnarray*}
\lefteqn{\d g\otimes \dbar g- gH(g)}\\
&=&(a^2 +2E g  +6F g(-\gamma)) \d \gamma \otimes \dbar \gamma -(2g (-\gamma)+(\gamma)^2 ) D(E,\gamma)-a g H(\gamma)+ O(\gamma^3) \\
&=&(1+4E(-\gamma)+4E^2(-\gamma)^2 -2E (-\gamma)-2E^2(-\gamma)^2  ) \d \gamma \otimes \dbar \gamma \\
&&+(\gamma)^2  D(E,\gamma)+(-\gamma)(1+3E(-\gamma)) H(\gamma)+ O(\gamma^3) \\
&=&(1+2E(-\gamma)+2E^2(-\gamma)^2  ) \d \gamma \otimes \dbar \gamma \\
&&+(\gamma)^2  D(E,\gamma)+(-\gamma)(1+3E(-\gamma)) H(\gamma)+ O(\gamma^3). 
\end{eqnarray*}
Therefore,
\begin{eqnarray*}
\lefteqn{{\cal L}_{H(J)} (\d g\otimes \dbar g- gH(g))}\\
&=& (1+2E(-\gamma)+2E^2(-\gamma)^2  ) {\cal L}_{H(J)} (\d \gamma \otimes \dbar \gamma) \\
&&+(-\gamma)^2  {\cal L}_{H(J)} (D(E,\gamma))+(-\gamma)(1+3E(-\gamma)) {\cal  L}_{H(J)}(H(\gamma))+ O(\gamma^3).
\end{eqnarray*}
Thus,
\begin{eqnarray*}
\lefteqn{(-g) {\cal L}_{H(J)} (\d g\otimes \dbar g- gH(g))}\\
&=&(-\gamma) (1+3E(-\gamma)+(F+4E^2)(-\gamma)^2  ) {\cal L}_{H(J)} (\d \gamma \otimes \dbar \gamma) \\
&&+(-\gamma)^3 {\cal L}_{H(J)}(D(E,\gamma))+(-\gamma)^2(1+4E(-\gamma)) {\cal  L}_{H(J)} (H(\gamma))+ O(\gamma^4).
\end{eqnarray*}
Moreover,
\begin{eqnarray*}
\lefteqn{{\cal L}_{\d J\otimes \dbar J} (\d g\otimes \dbar g- gH(g))}\\
&=&(1+2E(-\gamma)+(3F+2E^2)(-\gamma)^2  ) {\cal L}_{\d J\otimes \dbar J} (\d \gamma \otimes \dbar \gamma) \\
&&+(\gamma)^2  {\cal L}_{\d J\otimes \dbar J}(D(E,\gamma))+(-\gamma)(1+3E(-\gamma)) {\cal  L}_{\d J\otimes \dbar J} (H(\gamma))+ O(\gamma^3).
\end{eqnarray*}
Thus,
\begin{eqnarray*}
\lefteqn{ g {\cal L}_{\d J\otimes \dbar J} (\d g\otimes \dbar g- gH(g))}\\
&=&-(-\gamma) (1+3E(-\gamma)+(4F+4E^2)(-\gamma)^2  ) {\cal L}_{\d J\otimes \dbar J} (\d \gamma \otimes \dbar \gamma) \\
&&-(-\gamma)^3  {\cal L}_{\d J\otimes \dbar J}(D(E,\gamma))- (-\gamma)^2(1+4E(-\gamma)) {\cal  L}_{\d J\otimes \dbar J} (H(\gamma))+ O(\gamma^4).
\end{eqnarray*}
Therefore
\begin{eqnarray*}
\lefteqn{(-g) {\cal L}_{H(J)} (\d g\otimes \dbar g- gH(g))+  {g \over J} {\cal L}_{\d J\otimes \dbar J} (\d g\otimes \dbar g- gH(g)) }\\
&=&(-\gamma) (1+3E(-\gamma)+(4F+4E^2)(-\gamma)^2  ) {\cal L}_{H(J)} (\d \gamma \otimes \dbar \gamma) \\
&&+(-\gamma)^3 {\cal L}_{H(J)} D(E,\gamma)+(-\gamma)^2(1+4E(-\gamma)) {\cal  L}_{H(J)} (H(\gamma))\\
&&-{(-\gamma) \over J}(1+3E(-\gamma)+(4F+4E^2)(-\gamma)^2  ) {\cal L}_{\d J\otimes \dbar J} (\d \gamma \otimes \dbar \gamma) \\
&&-{(-\gamma)^3\over J}   D(E,\gamma)- {(-\gamma)^2\over J} (1+4E(-\gamma)) {\cal  L}_{\d J\otimes \dbar J} (H(\gamma))+ O(\gamma^4) \\
&=&G_1(-\gamma)+G_2(-\gamma)^2+G_3(-\gamma)^3+O(\gamma^4),
\end{eqnarray*}
where
$$
G_1= \Big({\cal L}_{H(J)}(\d \gamma \otimes \dbar \gamma)-{1\over J} {\cal  L}_{\d J\otimes \dbar J}(\d \gamma\otimes \dbar \gamma)\Big)
=J{\cal L}_{H(\log J)}(\d \gamma \otimes \dbar \gamma),
$$
$$
G_2=\Big(3E  G_1 +{\cal L}_{H(J)}(H(\gamma))-{1\over J}{\cal L}_{\d J\otimes \dbar J} (H(\gamma))\Big)
=3EG_1+J {\cal L}_{H(\log J)}(H(\gamma))
$$
and
\begin{eqnarray*}
G_3&=& (F+4E^2)G_1+{\cal L}_{H(J)} (D(E, \gamma) )-{1\over J} {\cal L}_{\d J\otimes \dbar J} (D(E,\gamma))\\
&&+4E {\cal L}_{H(J)}(H(\gamma))
-{4E \over J} {\cal L}_{\d J\otimes \dbar J}(H(\gamma))\\
&=&4(F+E^2) G_1+J {\cal L}_{H(\log J)}(D(E, \gamma))+4JE {\cal L}_{H(\log J)}(H(\gamma)).
\end{eqnarray*}
Therefore,
we have
\begin{eqnarray}
J(fg)&=&1+(4E+{G_1\over 3J^2})(-\gamma)+(I_2+{G_2\over 3 J^2})
(-\gamma)^2\nonumber\\
&&+\Big(I_3+{G_3\over 3 J^2}+{1 \over 9J}\det (H(\log J)\Big)(-\gamma)^3+O(\gamma^4).
\end{eqnarray}
Now we let
\begin{eqnarray}
E=-{G_1\over 12 J^2}=-{1\over 12 J}  \Big({\cal L}_{H(\log J)}(\d \gamma \otimes \dbar \gamma)\Big).
\end{eqnarray}
Then
$$
4E+{G_1\over 3 J^2}=0.
$$
Since
\begin{eqnarray*}
(I_2+{G_2\over 3 J^2})&=&3F+6E^2+{ \det H(\gamma)\over J} E+
{1\over J}{\cal L}_{H(\gamma)}(D(E, \gamma))-{1\over J}{\cal L}_{\d \gamma \otimes \dbar \gamma}(H(E))\\
&&+{E\over J^2} G_1 +{1\over 3 J}{\cal L}_{H(\log J)}(H(\gamma))\\
&=&3F+6E^2+{\det H(\gamma)\over J} E -12 E^2\\
&&+{1\over J} \Big({\cal L}_{H(\gamma)}(D(E, \gamma))-{\cal L}_{\d \gamma \otimes \dbar \gamma}(H(E))
+{1\over 3 }{\cal L}_{H(\log J)}(H(\gamma))\Big)\\
&=& 0,
\end{eqnarray*}
if we let
\begin{eqnarray*}
F&=&2E^2- {1 \over 3 J} \Big[ E  \det H(\gamma)+
{\cal L}_{H(\gamma)}(D(E, \gamma))-{\cal L}_{\d \gamma \otimes \dbar \gamma}(H(E))
+{1\over 3 }{\cal L}_{H(\log J)}(H(\gamma))\Big],
\end{eqnarray*}
we obtain
$$
E(10F+4E^2)+(F+4E^2){G_1\over 3J^2}=E(10F+4E^2)+(F+4E^2)(-4E)=E(6F-12 E^2).
$$
We now introduce
\begin{eqnarray*}
{\cal A}:&=& I_3+{G_3\over 3J^2}+{1\over 9J} \det H(\log J)\\
&=&6 E(F-E^2) +(4F+2E^2) {\det H(\gamma)\over J} +{2E \over J} {\cal L}_{H(\gamma)}(D(E, \gamma))\\
&&+ {1\over J} \Big[ 2 {\cal L}_{H(\gamma)} ( D(F, \gamma) ) ) -2E {\cal L}_{\d \gamma \otimes \dbar \gamma}(H(E)-{\cal L }_{H(\gamma)}(H(E)) 
-{\cal L}_{\d \gamma \otimes \dbar \gamma}(H(F)) \Big]\\
&&+{1\over 3J}  {\cal L}_{H(\log J)}(D(E, \gamma))+{4E \over 3 J}  {\cal L}_{H(\log J)}(H(\gamma))
+{1\over 9J } \det H(\log J).
\end{eqnarray*}

\subsection{Preliminaries for the proof of Theorem 1.1}
In this section, we apply the above outcome for the case of real ellipsoids in $\CC^{2}$. Note that, after complex linearly changes of variables, any
real ellipsoid   in $\mathbb{C}^2$ can be written by
\begin{equation}
E(A)=\{z\in \CC^2: \gamma(z)<0\},
\end{equation}
where
\begin{equation}
\gamma (z)=|z|^2+\sum_{j=1}^2A_j \hbox{Re}( z_j^2)-1,\quad A_j \in [0, 1).
\end{equation}
Then we compute:
$$
\gamma_j=\zbar_j+A_j z_j,
$$
$$
J(\gamma)=-\gamma+ |\d \gamma|^2 =1+\sum_{j=1}^2A_j^2 |z_j|^2+\sum_{j=1}^2 A_j \hbox{Re}  z_j^2.
$$
Therefore 
$$
J_j=A_j^2 \zbar_j+A_j z_j=A_j (z_j+A_j \zbar_j)=A_j \gamma_{\jbar},
$$
and
$$
H(\log J)={1\over J} (D(A^2)-{1\over J} A\dbar \gamma \otimes A\d \gamma),
$$
where
$D(A^2)$ is the diagonal matrix with diagonal entries are $A_1^2$ and $A_2^2$ and $A\dbar \gamma=(A_1\gamma_{\bar1}, A_2 \gamma_{\bar2})$. 
\medskip

\subsection {The function $E$ and its derivatives}
Let us first compute
\begin{eqnarray*}
E&=&-{1\over 12 J} {\cal L}_{H(\log J}(\d \gamma\otimes \dbar \gamma)\nonumber\\
&=&-{1\over 12 J^3} (A_2^2 |\gamma_1|^2+A_1^2 |\gamma_2|^2)J(\gamma)+{1\over 12 J^3}|A_1 \gamma_{\bar1} \gamma_2-A_2\gamma_{\bar2} \gamma_1|^2.\\
\end{eqnarray*}
Now, with $J(\gamma)=-\gamma+|\d \gamma|^2$, one has
\begin{eqnarray*}
f(z)&=& (A_2^2|\gamma_1|^2+A_1^2|\gamma_2|^2) J-|A_1 \gamma_{\bar1} \gamma_2-A_2\gamma_{\bar2} \gamma_1|^2\\
&=& A_1^2|\gamma_2|^4+A_2^2 |\gamma_1|^4+(-\gamma)(A_1^2|\gamma_2|^2+A_2^2|\gamma_1|^2) +A_1A_2  2\hbox{Re}(\gamma_{\bar1}^2\gamma_2^2).
\end{eqnarray*}
Then
\begin{eqnarray}
E=-{f\over 12 J^3}.
\end{eqnarray}
Moreover,
\begin{eqnarray*}
\d_1 f&=&2A_2^2 ( \gamma_{\bar1} \gamma_1^2 +A_1 \gamma_1 \gamma_{\bar1}^2)+(-\gamma) A_2^2 (\gamma_1+A_1 \gamma_{\bar1})\\
&&-\gamma_1(A_1^2|\gamma_2|^2
+A_2^2|\gamma_1|^2)
+2A_1A_2 (\gamma_2^2 \gamma_{\bar1} +A_1 \gamma_{\bar2}^2 \gamma_1)\\
&=&A_2^2 ( \gamma_{\bar1} \gamma_1^2 +2A_1 \gamma_1 \gamma_{\bar1}^2)+(-\gamma) A_2^2 (\gamma_1+A_1 \gamma_{\bar1})\\
&&- A_1^2 \gamma_1|\gamma_2|^2
+2A_1A_2 (\gamma_2^2 \gamma_{\bar1} +A_1 \gamma_{\bar2}^2 \gamma_1),
\end{eqnarray*}
\begin{eqnarray*}
\d_2 f&=&A_1^2  (2 A_2 \gamma_2 \gamma_{\bar2}^2 +2 \gamma_2^2 \gamma_{\bar2}) +(-\gamma) A_1^2  (\gamma_2+A_2 \gamma_{\bar2}) \\
&&-\gamma_2 (A_1^2|\gamma_2|^2+A_2^2|\gamma_1|^2)
+2A_1A_2 ( A_2 \gamma_2 \gamma_{\bar1} ^2+ \gamma_{\bar2} \gamma_1^2)\\
&=&A_1^2  (2 A_2 \gamma_2 \gamma_{\bar2}^2 + \gamma_2^2 \gamma_{\bar2}) +(-\gamma) A_1^2  (\gamma_2+A_2 \gamma_{\bar2}) \\
&&-A_2^2 \gamma_2 |\gamma_1|^2
+2A_1A_2 ( A_2 \gamma_2 \gamma_{\bar1} ^2+ \gamma_{\bar2} \gamma_1^2),
\end{eqnarray*}
\begin{eqnarray*}
\d_{1\bar1}  f&=&A_2^2 ( A_1 \gamma_1^2 +2 | \gamma_1|^2 +2A_1 \gamma_{\bar1}^2 +4A_1^2|\gamma_1|^2 )
- A_2^2 (|\gamma_1|^2 +A_1 \gamma_{\bar1}^2)+(-\gamma) A_2^2 (1+A_1^2)\\
&&-(A_1^2|\gamma_2|^2
+A_2^2|\gamma_1|^2)- A_2^2 (|\gamma_1|^2 +A_1 \gamma_1 ^2)+2A_1^2A_2 ( \gamma_2^2 + \gamma_{\bar2}^2 )\\
&=&A_2^2 ( A_1 \gamma_1^2 + |\gamma_1|^2 +A_1 \gamma_{\bar1}^2 +4A_1^2|\gamma_1|^2 )
+(-\gamma) A_2^2 (1+A_1^2)\\
&&-A_1^2|\gamma_2|^2+2A_1^2A_2 ( \gamma_2^2 + \gamma_{\bar2}^2 )\\
\end{eqnarray*}
and
\begin{eqnarray*}
\d_{2\bar2} f&=&A_1^2 \d_{2\bar2} |\gamma_2|^4-A_1 \gamma_2  \d_2 |\gamma_2|^2+
 (-\gamma) A_1 \d_{2\bar 2}  |\gamma_2|^2\\
 &&- (A_1^2|\gamma_2|^2+A_2^2|\gamma_1|^2)- A_1^2 \gamma_2 \d_{\bar2} |\gamma_2|^2)
+2A_1A_2^2 (   \gamma_{\bar1} ^2+  \gamma_1^2)\\
&=&2A_1^2(A_2\gamma_2^2+A_2 \gamma_{\bar2}^2 +2(1+A_2^2) |\gamma_2|^2)
-A_1^2(|\gamma_2|^2+A_2 \gamma_{\bar2}^2)+(-\gamma)A_1^2(1+A_2^2)\\
&&-(A_1^2 |\gamma_2|^2+A_2^2|\gamma_1|^2)-A_1^2(|\gamma_2|^2+A_2 \gamma_2^2)
+2A_1A_2^2(\gamma_1^2+\gamma_{\bar1}^2)\\
&=&A_1^2(A_2\gamma_2^2+A_2 \gamma_{\bar2}^2 +(1+4A_2^2) |\gamma_2|^2)
+(-\gamma)A_1^2(1+A_2^2)\\
&&-A_2^2|\gamma_1|^2
+2A_1A_2^2(\gamma_1^2+\gamma_{\bar1}^2).
\end{eqnarray*}

Let $z_0\in \d D(A)$ with $\gamma_1(z_0)=0$. Then
$$
E(z_0)=-{A_1^2\over 12 J}, \quad f(z_0)=A_1^2 J^2,
$$
$$
\d_j E=-{\d_j f\over 12 J^3} +{f \over 4 J^4} \d_j J\quad \hbox{and}\quad  \d_1 f(z_0)=0.
$$
 This implies that
$$
\d_1 E(z_0)=0\quad\hbox{and}\quad  \d_2 E(z_0)=-{A_1^2 (A_2 \gamma_{\bar2}+\gamma_2)
\over 12 J^2}+{A_2 A_1^2\over 4 J^2} \gamma_{\bar2}={A_1^2 (A_2 \gamma_{\bar2}-\gamma_2)\over 12 J^2}.
$$
Then
$$
2\hbox{Re}(\gamma_{\bar2} E_2(z_0))={A_1^2 A_2 2\hbox{Re} \gamma_2^2 \over 12J^2}-{A_1^2\over 6J} \quad
\hbox{ and}\quad \gamma_2 E_2(z_0)={A_1^2\over 12 J}\Big(A_2-{\gamma_2^2 \over  J} \Big).
$$
Notice that
$$
\d_{j\jbar } E=-{\d_{j\jbar} f\over 12 J^3}+{f _j J_{\bar j}  +f_{\bar j} J_j \over 4 J^4} -{f \over J^5} J_{\bar j} J_j +{f\over 4 J^4} A_j^2.
$$
Therefore,
\begin{eqnarray*}
E_{1\bar1}(z_0)&=&-{f_{1\bar1} (z_0) \over 12 J^3}+{A_1^2 J^2\over 4J^4} A_1^2\\
&=&-{-A_1^2 J+2A_1^2A_2(\gamma_2^2+\gamma_{\bar2}^2) \over 12 J^3}+{A_1^4\over 4J^2}\\
&=& -{A_1^2 A_2(\gamma_2^2+\gamma_{\bar2}^2)\over 6 J^3}+{A_1^2 +3A_1^4\over 12 J^2}
\end{eqnarray*}
and 
\begin{eqnarray*}
E_{2\bar2}(z_0)&=&-{ f_{2\bar 2} \over 12 J^3}+{f _2 J_{\bar 2}  +f_{\bar 2} J_2 \over 4 J^4 } -{f \over J^5} J_{\bar 2} J_2+{f\over 4 J^4} A_2^2\\
&=&-{ A_1^2(A_2\gamma_2^2+A_2 \gamma_{\bar2}^2 +(1+4A_2^2) |\gamma_2|^2) \over 12 J^3}\\
&&+A_2 A_1^2 {(4A_2|\gamma_2|^4+|\gamma_2|^2\gamma_2^2 +
|\gamma_2|^2 \gamma_{\bar2}^2)  \over 4 J^4 } -{A_1^2 J^2 A_2^2 |\gamma_2|^2  \over J^5} +{A_1^2 A_2^2\over 4 J^2} \\
&=&-{ A_1^2 A_2 (\gamma_2^2+ \gamma_{\bar2}^2 ) \over 12 J^3}
-{ A_1^2 (1+4A_2^2)  \over 12 J^2}+4{A_1^2 A_2^2  \over 4 J^2 } 
 +A_2 A_1^2 {\gamma_2^2  +
 \gamma_{\bar2}^2  \over 4 J^3 } 
-{3A_1^2  A_2^2 \over 4 J^2} \\
&=&{ A_1^2 A_2 (\gamma_2^2+ \gamma_{\bar2}^2 ) \over 6 J^3}
+{ -4A_1^4 +3A_1^2A_2^2 -A_1^2  \over 12 J^2}.
\end{eqnarray*}
Since
\begin{eqnarray*}
 f_{22}(z_0)&=&\d_2 \Big(A_1^2  (2 A_2 \gamma_2 \gamma_{\bar2}^2 + \gamma_2^2 \gamma_{\bar2}) +(-\gamma) A_1^2  (\gamma_2+A_2 \gamma_{\bar2}) \\
&&\qquad -A_2^2 \gamma_2 |\gamma_1|^2
+2A_1A_2 ( A_2 \gamma_2 \gamma_{\bar1} ^2+ \gamma_{\bar2} \gamma_1^2)\Big)\\
&=&A_1^2(2A_2^2 \gamma_{\bar2}^2+6A_2|\gamma_2|^2+\gamma_2^2)- A_1^2(\gamma_2^2+A_2|\gamma_2|^2)\\
&=&A_1^2(2A_2^2 \gamma_{\bar2}^2+5A_2|\gamma_2|^2),
\end{eqnarray*}
one has
\begin{eqnarray*}
E_{2 2}(z_0)&=&-{f_{22}\over 12 J^3}+{f_2 J_2\over 2 J^4}+{A_2f \over 4J^4}-{f(J_2)^2\over J^5}\\
&=&-{A_1^2 A_2^2 \gamma_{\bar2}^2 \over 6 J^3}-{5 A_1^2A_2 \over 12 J^2}+A_2 A_1^2 {(2 A_2 |\gamma_2|^2 \gamma_{\bar2}^2+|\gamma_2|^4)\over 2J^4}
+A_2{A_1^2|\gamma_2|^4 \over 4J^4}-A_2^2 {A_1^2 |\gamma_2 |^4 \gamma_{\bar 2}^2 \over J^5}\\
&=&-{A_1^2 A_2^2 \gamma_{\bar2}^2 \over 6 J^3}+{A_1^2A_2 \over 3 J^2}.
\end{eqnarray*}
Since
$$
f_{11}(z_0)=-A_1^3 |\gamma_2|^2+2A_1A_2 (\gamma_2^2+A_1^2 \gamma_{\bar2}^2)
$$
and
\begin{eqnarray*}
\d_{1\bar1}  f
&=&A_2^2 ( A_1 \gamma_1^2 + |\gamma_1|^2 +A_1 \gamma_{\bar1}^2 +4A_1^2|\gamma_1|^2 )
+(-\gamma) A_2^2 (1+A_1^2)\\
&&-A_1^2|\gamma_2|^2+2A_1^2A_2 ( \gamma_2^2 + \gamma_{\bar2}^2 ),
\end{eqnarray*}
one has
\begin{eqnarray*}
\d_{2 1\bar1}  f(z_0)
&=&-\gamma_2 A_2^2 (1+A_1^2)-A_1^2 (\gamma_2+A_2 \gamma_{\bar2} )+2A_1^2A_2 (2 A_2 \gamma_2+2 \gamma_{\bar2})\\
&=&\gamma_2 (3A_1^2A_2^2 -A_1^2-A_2^2) +3A_1^2A_2  \gamma_{\bar2}\\
\end{eqnarray*}
and
\begin{eqnarray*}
\d_{11\bar1}  f (z_0)
&=&0.
\end{eqnarray*}
Notice that
$$
\d_{1\bar1} |\gamma_1|^2=1+A_1^2,\quad \d_{1\bar1} \gamma_1^2=2A_1,
$$
one has
\begin{eqnarray*}
\d_{1\bar1 1\bar1}  f
&=&A_2^2 ( A_12 A_1+ (1+A_1^2) +A_1 2A_1 +4A_1^2(1+A_1^2) )
-A_2^2 (1+A_1^2)\\
&=&4A_1^2A_2^2(2+A_1^2).
\end{eqnarray*}
Since
\begin{eqnarray*}
\d_{  1 \1bar } E&=&-{f_{1\bar1} \over 12 J^3}+{f _1 J_{\bar 1}  +f_{\bar 1} J_1\over 4 J^4} -{f \over J^5} J_{\bar 1} J_1+{f\over 4 J^4} A_1^2,
\end{eqnarray*} 
one has
\begin{eqnarray*}
\d_{2 1 \1bar } E(z_0)&=&-{f_{21\bar1} \over 12 J^3}+{f_{1\bar1} J_2 \over 4 J^4} +{f_2\over 4 J^4} A_1^2-{fJ_2 \over J^5}\\
&=&-{(3A_1^4A_2^2-A_1^2-A_2^2)\gamma_2+3A_1^2 A_2 \gamma_{\bar2} \over 12 J^3}\\
&&+A_2 \gamma_{\bar2} {-A_1^2|\gamma_2|^2+2A_1^2A_2(\gamma_2^2+\gamma_{\bar2}^2)  \over 4 J^4} +{A_1^2 (2A_2 \gamma_{\bar2}+\gamma_2) \over 4 J^3} A_1^2-{A_1^2 A_2 \gamma_{\bar2}  \over J^3}\\
&=&{(3A_1^2A_2^2+A_1^2+A_2^2+3A_1^4)\gamma_2 \over 12 J^3} + A_1^2A_2^2{ \gamma_{\bar2}^3  \over 2 J^4} +{ A_1^4 A_2 \gamma_{\bar2} \over 2 J^3} 
-3 {A_1^2 A_2 \gamma_{\bar2}  \over 2 J^3}.
\end{eqnarray*} 
Thus,
\begin{eqnarray*}
\lefteqn{2\hbox{Re} \gamma_{\bar2} \d_{2 1 \1bar } E(z_0)}\\
&=&{(3A_1^2A_2^2+A_1^2+A_2^2+3A_1^4) \over 6 J^2} + A_1^2A_2^2{ 2\hbox{Re} \gamma_{\bar2}^4  \over 2 J^4} +{ A_1^4 A_22\hbox{Re} \gamma_{\bar2} ^2\over 2 J^3} 
-3 {A_1^2 A_2 2\hbox{Re} \gamma_{\bar2}^2  \over 2 J^3}.
\end{eqnarray*} 
Since
\begin{eqnarray*}
\d_{\bar1} f
&=&A_2^2 ( 2 A_1 \gamma_{\bar1} \gamma_1^2 +\gamma_1  \gamma_{\bar1}^2)+(-\gamma) A_2^2 (\gamma_{\bar1}+A_1 \gamma_1 )\\
&&- A_1^2 \gamma_{\bar1} |\gamma_2|^2
+2A_1A_2 (A_1 \gamma_2^2 \gamma_{\bar1} + \gamma_{\bar2}^2 \gamma_1),
\end{eqnarray*}
and
\begin{eqnarray*}
\d_{\bar1 \bar1} f
&=&A_2^2 ( 2 A_1^2 \gamma_1^2 +4A_1 |\gamma_1|^2 +\gamma_{\bar1}^2 +2A_1 |\gamma_1 |^2 )
+(-\gamma) A_2^2 2A_1-\gamma_{\bar1} A_2^2(\gamma_{\bar1}+A_1 \gamma_1)\\
&&- A_1^3 |\gamma_2|^2
+2A_1A_2 (A_1^2 \gamma_2^2 + \gamma_{\bar2}^2 ).
\end{eqnarray*}
Then 
$$
f_{\bar1\bar1}(z_0)=-A_1^3|\gamma_2|^2+2A_1A_2(A_1^2 \gamma_2^2+\gamma_{\bar2}^2),
$$
\begin{eqnarray*}
f_{2\bar1\bar1}(z_0)
&=&-2A_1A_2^2 \gamma_2 
- A_1^3 (\gamma_2+A_2\gamma_{\bar2} )
+4A_1A_2 (A_1^2 A_2 \gamma_2 +  \gamma_{\bar2} ),
\end{eqnarray*}
and
\begin{eqnarray*}
E_{2\bar1\bar1}(z_0)
&=&-{f_{2\bar1\bar1}\over 12 J^3}+{f_{\bar1\bar1} J_2\over 4 J^4}+{f_2\over 4J^4} A_1-A_1{f\over J^5} J_2\\
&=&-{-2A_1 A_2^2 \gamma_2 
- A_1^3 ( \gamma_2+A_2\gamma_{\bar2} )
+4A_1A_2 (A_1^3 \gamma_2 + \gamma_{\bar2} ) \over 12 J^3}\\
&&+A_2 \gamma_{\bar2} {-A_1^3|\gamma_2|^2+2A_1A_2(A_1^2 \gamma_2^2+\gamma_{\bar2}^2) \over 4 J^4}+{A_1^2(2A_2\gamma_{\bar2}+\gamma_2)\over 4J^3} A_1-A_1A_2 {A_1^2 \over J^3} \gamma_{\bar2}\\
&=&{(2A_1 A_2^2 
+ A_1^3 -A_1^4A_2)  \gamma_2+(A_1^3 A_2- 4A_1 A_2) \gamma_{\bar2}  \over 12 J^3}\\
&&+  {-3A_1^3 A_2   \gamma_{\bar2} +(2A_1^3 A_2^2  +A_1^3) \gamma_2 \over 4J^3} +{2A_1A_2^2\over 4J^4}  \gamma_{\bar2}^3\\
&=&{(2A_1 A_2^2 
+ 4A_1^3+6A_1^3A_2^2 -A_1^4A_2)  \gamma_2+(-8A_1^3 A_2- 4A_1 A_2) \gamma_{\bar2}  \over 12 J^3}+{2A_1A_2^2\over 4J^4}  \gamma_{\bar2}^3.
\end{eqnarray*}
Thus,
\begin{eqnarray*}
2\hbox{Re} \gamma_{\bar2} E_{2\bar1\bar1}(z_0)
&=&{(2A_1 A_2^2 
+ 4A_1^3+6A_1^3A_2^2 -A_1^4A_2)  \over 6 J^2}\\
&&+{(-8A_1^3 A_2- 4A_1 A_2)   \over 12 J^3} 2\hbox{Re}(\gamma_2^2) +{2A_1A_2^2\over 4J^4} 2\hbox{Re}( \gamma_{\bar2}^4).
\end{eqnarray*}
Notice that
$$
J_{11}=A_1,\  J_{1\bar1}=A_1^2,\ f_{11}(z_0)=-A_1^2 J+2A_1A_2(A_1^2 \gamma_{\bar2}^2+\gamma_2^2),
$$
$$
\ f_{1\bar1}(z_0)=-A_1^2 J+2A_1A_2(\gamma_2^2+\gamma_{\bar2}^2)
$$
and
$$
f_{1\bar1 1\bar1}(z_0)=4A_1^2A_2^2(2+A_1^2),
$$
one has
\begin{eqnarray*}
\lefteqn{4f_{1\bar1} J_{1\bar1 }+ f_{11} J_{\bar1\bar1}+f_{\bar1\bar1} J_{11} }\\
&=&-4A_1^4 J+8A_1^3A_2(\gamma_2^2+\gamma_{\bar2}^2)
+A_1 2\hbox{Re}(-A_1^2 J+2A_1 A_2(A_1^2 \gamma_2^2+\gamma_{\bar2}^2)\\
&=&(-2A_1^3-4A_1^4) J +(8A_1^3A_2 +2A_1^2A_2(1+A_1^2)) 2\hbox{Re}(\gamma_2^2).
\end{eqnarray*}

\begin{eqnarray*}
\d_{1\bar1  1 \1bar } E(z_0)&=&-{f_{1 \bar1 1\bar1} \over 12 J^3}
+{f_{1\bar1} \over 4 J^4}J_{1\bar1} +{2 f_{1\bar1} J_{1\bar1}+f_{11} J_{\bar1 \bar1}+f_{\bar1\bar1} J_{11} \over 4 J^4} \\
&& -{f \over J^5} A_1^2(1+A_1^2)+{f_{1\bar1} \over 4 J^4} J_{1\bar1}- {f\over  J^5} A_1^4\\
&=&-{4A_1^2 A_2^2(2+A_1^2) \over 12 J^3}-{A_1^2  \over J^3} A_1^2(1+2A_1^2)\\
&&+{(-2A_1^3-4A_1^4) J \over 4J^4}+{(8A_1^3A_2 +2A_1^2A_2(1+A_1^2)) \over 4J^4} 2\hbox{Re}(\gamma_2^2)\\
\\
&=&-{6A_1^6 +2A_1^4 A_2^2 +12A_1^4+3A_1^3+4A_1^2A_2^2 \over 6 J^3}\\
&&+{(8A_1^3A_2 +2A_1^2A_2(1+A_1^2)) \over 4J^4} 2\hbox{Re}(\gamma_2^2).
\end{eqnarray*}

\subsection{The function $F$ and its derivatives}
 Notice that 
 \begin{eqnarray*}
F&=&2E^2- {1 \over 3 J} \Big( E +
{\cal L}_{H(\gamma)}(D(E, \gamma))-{\cal L}_{\d \gamma \otimes \dbar \gamma}(H(E))
+{1\over 3 }{\cal L}_{H(\log J)}(H(\gamma))\Big)\\
&=&2 E^2-{E\over 3 J} -{1\over 3J} 2\hbox{Re}( \gamma_{\bar1} E_1+\gamma_{\bar2} E_2)
+{1\over 3J}\Big(|\gamma_2|^2 E_{1\bar1}+|\gamma_1|^2 E_{2\bar2}-2\hbox{Re} (\gamma_1 \gamma_{\bar2} E_{2\bar1})\Big)\\
&&-{A_1^2 +A_2^2 \over 9 J^2} +{A_1^2 |\gamma_1|^2 +A_2^2 |\gamma_2|^2\over 9J^3}
\end{eqnarray*}
Since $E_2(z_0)=A_1^2 A_2 \gamma_{\bar2}(z_0) /6J^2$, one has
$$
2\hbox{Re}(\sum_{j=1}^2 \gamma_{\bar j}(z_0) E_j(z_0) )=A_1^2 A_2 {\gamma_2(z_0)^2+\gamma_{\bar2} (z_0)^2\over 6J^2}
$$
and 
$$
E_{1\bar1}(z_0)=-{A_1^2 A_2(\gamma_2^2+\gamma_{\bar2}^2)\over 6 J^3}+{A_1^2 +3A_1^4\over 12 J^2}.
$$
 Then
\begin{eqnarray*}
F(z_0)&=&2E(z_0)^2-{E(z_0)\over 3J}-{2\hbox{Re}(\gamma_{\bar2} E_2)\over 3J}+{E_{1\bar1}(z_0)\over 3}-{A_1^2 \over 9 J^2}\\
&=& 2{A_1^4 \over (12)^2J^2} +{A_1^2 \over 36 J^2} -A_1^2A_2 {\gamma_2^2+\gamma_{\bar2}^2\over 36J^3} +{A_1^2\over 18J}
-{A_1^2 A_2(\gamma_2^2+\gamma_{\bar2}^2)\over 18 J^3}+{A_1^2 +3A_1^4\over 36 J^2}-{A_1^2\over 9J^2}\\
&=& {7 A_1^4 \over 72 J^2}  -A_1^2A_2 {\gamma_2^2+\gamma_{\bar2}^2\over 12 J^3} .
\end{eqnarray*}

Notice that with $\gamma_1(z_0)=0$ and $E(z_0)=-A_1^2/(12J)$, one has
\begin{eqnarray*}
\d_2 F(z_0)&=&4E E_2(z)-{E_2\over 3J}+{E A_2 \gamma_{\bar2}\over 3J^2}-{1\over 3J} (E_2+\gamma_{\bar2} E_{22}+A_2 E_{\bar2}+\gamma_{2} E_{2\bar2})\\
&&+{1\over 3J} (|\gamma_2|^2 E_{1\bar1 2}+(\gamma_2+A_2 \gamma_{\bar2})E_{1\1bar})- {A_2 \gamma_{\bar2} \over 3J} E_{1\bar1}\\
&&+{2\over 9} {A_1^2+A_2^2\over J^3} A_2 \gamma_{\bar2}
+{A_2^2(\gamma_2+A_2 \gamma_{\bar2} )\over 9J^3} -{A_2^2  \over 3 J^3} A_2 \gamma_{\bar2}\\
&=&-{(A_1^2+2) E_2\over 3 J}+7 {A_1^2  A_2 \gamma_{\bar2}\over 36 J^3}-{1\over 3J} (\gamma_{\bar2} E_{22}+A_2 E_{\bar2}+\gamma_{2} E_{2\bar2})\\
&&+{1\over 3} E_{1\bar1 2}+{1\over 3J} \gamma_2E_{1\1bar}
+{A_2^2 \gamma_2\over 9J^3}
\end{eqnarray*}
and
$$
2\hbox{Re}(\gamma_{\bar2} E_2)={A_1^2(A_2 2\hbox{Re}(\gamma_2^2)-2J \over 12 J^2}\  \hbox{ and } \ 2\hbox{Re}(\gamma_{\bar2} E_{\bar2})={A_1^2\over 12 J^2} (2J A_2-2\hbox{Re}(\gamma_2^2)).
$$
Then
\begin{eqnarray*}
2\hbox{Re}(\gamma_{\bar2}  \d_2 F(z_0))
&=&-{(A_1^2+2)  2\hbox{Re} (\gamma_{\bar2} E_2)\over 3 J}+7 {A_1^2  A_2 \over 18 J^2}
-{1\over 3J } \Big( 2\hbox{Re} \gamma_{\bar2}^2 E_{22}+A_2 2\hbox{Re} \gamma_{\bar2} E_{\bar2}\Big)\\
&&-{2\over 3}  E_{2\bar2}+{1\over 3} 2\hbox{Re} (\gamma_{\bar2}  E_{1\bar1 2})+{2\over 3}E_{1\1bar}
+2{A_2^2\over 9J^2} \\
&=&{-A_1^4-2A_1^2 +7A_1^2A_2-A_1^2A_2^2+4A_2^2 \over 18 J^2} +{A_1^4A_2+3A_1^2A_2\over 36 J^3} 2\hbox{Re}(\gamma_2^2)\\
&&-{1\over 3J} (2\hbox{Re}(\gamma_{\bar2}^2 E_{22})-{2\over 3} E_{2\bar2} +{2\over 3} E_{1\bar1}+{1\over 3} 2\hbox{Re} (\gamma_{\bar2}  E_{1\bar1 2}).
\end{eqnarray*}
Since
\begin{eqnarray*}
E_{1\bar1}-E_{2\bar2}
&=&-{A_1^2 A_2(\gamma_2^2+\gamma_{\bar2}^2)\over 6 J^3}+{A_1^2 +3A_1^4\over 12 J^2}
-\Big({ A_1^2 A_2 (\gamma_2^2+ \gamma_{\bar2}^2 ) \over 6 J^3}
-{ A_1^2  \over 12 J^2}
-{A_1^2  A_2^2 \over 12 J^2}.\Big)\\
&=&-{A_1^2 A_2(\gamma_2^2+\gamma_{\bar2}^2)\over 3 J^3}+{2A_1^2 +3A_1^4 +A_1^2A_2^2\over 12 J^2}
\\
\end{eqnarray*}
and 
\begin{eqnarray*}
-{1\over 3J} 2\hbox{Re} (\gamma_{\bar2}^2 E_{22}(z_0))&=&-{1\over 3J} 2 \hbox{Re} \Big(\gamma_{\bar2}^2(-{A_1^2 A_2^2 \gamma_{\bar2}^2 \over 6 J^3}+{A_1^2A_2 \over 3 J^2})\Big)\\
&=& {A_1^2 A_2^2  \over 18 J^4} 2\hbox{Re}(\gamma_2^4) -{A_1^2A_2 \over 9 J^3} 2\hbox{Re}(\gamma_2^2),
\end{eqnarray*}
one has
\begin{eqnarray*}
2\hbox{Re}(\gamma_{\bar2}  \d_2 F(z_0))
&=&{2A_1^4+7A_1^2A_2+4A_2^2 \over 18 J^2} +{A_1^4A_2-9A_1^2A_2\over 36 J^3} 2\hbox{Re}(\gamma_2^2)\\
&&+{A_1^2A_2^2 \over 18J^4} 2\hbox{Re}(\gamma_{\bar2}^4)+{1\over 3} 2\hbox{Re} (\gamma_{\bar2}  E_{1\bar1 2}).
\end{eqnarray*}
Notice that
\begin{eqnarray*}
2\hbox{Re} \gamma_{\bar2} \d_{2 1 \1bar } E(z_0)
&=&{(3A_1^2A_2^2+A_1^2+A_2^2+3A_1^4) \over 6 J^2} + A_1^2A_2^2{ 2\hbox{Re} \gamma_{\bar2}^4  \over 2 J^4} +{ (A_1^2-3)A_1^2 A_22\hbox{Re} \gamma_{\bar2} ^2\over 2 J^3} 
\end{eqnarray*} 
one has
\begin{eqnarray}
2\hbox{Re}(\gamma_{\bar2}  \d_2 F(z_0))
&=&{5A_1^4+3A_1^2A_2^2+7A_1^2A_2+A_1^2 +5A_2^2 \over 18 J^2} \nonumber\\
&&+{7A_1^4A_2-27A_1^2A_2\over 36 J^3} 2\hbox{Re}(\gamma_2^2)+4{A_1^2A_2^2 \over 18J^4} 2\hbox{Re}(\gamma_{\bar2}^4).\qquad
\end{eqnarray}
Therefore,
 \begin{eqnarray*}
\d_{1\bar1} F(z_0)&=&4 E E_{1\bar1}(z_0)-{E_{1\bar1}\over 3 J}+{E\over 3J^2} J_{1\bar1}\\
&& -{1\over 3J} ( 2 E_{1\bar1} +A_1 E_{\bar1 \bar1}+A_1 E_{11})+2\hbox{Re}(\gamma_{\bar2 }E_{1 \bar12})
+{2\hbox{Re}(\gamma_{\bar2} E_2) J_{1\bar1} \over 3 J^2}\\
&&+{1\over 3J}\Big(|\gamma_2|^2 E_{1\bar1 1\bar1}+ (1+A_1^2)E_{2\bar2}-2\hbox{Re} ( \gamma_{\bar2} E_{2 1 \bar1} )-A_12\hbox{Re}(\gamma_{\bar2} E_{2\bar1 \bar1})\Big)\\
&&-{1\over 3J^2}\Big(|\gamma_2|^2 E_{1\bar1}\Big) J_{1\bar1}\\
&&+2{A_1^2 +A_2^2 \over 9 J^3} J_{1\bar1} +{A_1^2 (1+A_1^2) \over 9J^3}-{A_2^2 |\gamma_2|^2\over 3J^4} J_{1\bar1}\\
&=&-{3+2A_1^2 \over 3J}  E_{1\bar1}(z_0)-{A_1^4 \over 36J^3} \\
&& -{A_1 \over 3J} (  E_{\bar1 \bar1}+E_{11})-{1\over 3J}(\gamma_2 E_{1 \bar1 \bar2}+\gamma_{\bar2} E_{1 \bar1 2})
+{2\hbox{Re}(\gamma_{\bar2} E_2) A_1^2 \over 3 J^2}\\
&&+{1\over 3J}\Big(|\gamma_2|^2 E_{1\bar1 1\bar1}+ (1+A_1^2)E_{2\bar2}-2\hbox{Re} ( \gamma_{\bar2} E_{2 1 \bar1} -A_1\gamma_{\bar2} E_{2\bar1 \bar1})\Big)\\
&&+{3A_1^4 -A_1^2A_2^2+A_1^2  \over 9 J^3}  \\
&=&-{3+2A_1^2 \over 3J}  E_{1\bar1}(z_0)+{11 A_1^4 \over 36J^3}+{A_1^2  \over 9J^3}-{A_1^2 A_2^2 \over 9J^3}  \\
&& -{A_1 \over 3J} (  E_{\bar1 \bar1}+E_{11})-{2\over 3J} 2\hbox{Re} (\gamma_2 E_{1 \bar1 \bar2})
+{2\hbox{Re}(\gamma_{\bar2} E_2) A_1^2 \over 3 J^2}\\
&&+{1\over 3} E_{1\bar1 1\bar1}+{1+A_1^2 \over 3J}E_{2\bar2}-{A_1\over 3J} 2\hbox{Re} (\gamma_{\bar2} E_{2\bar1 \bar1}).
\end{eqnarray*}
Notice that
\begin{eqnarray*}
L_1:&=&-{3+2A_1^2 \over 3J}  E_{1\bar1}(z_0)+{11 A_1^4 \over 36J^3}+{A_1^2  \over 9J^3}-{A_1^2 A_2^2 \over 9J^3}  \\
&=&-{3+2A_1^2 \over 3J} \Big( -{A_1^2 A_2(\gamma_2^2+\gamma_{\bar2}^2)\over 6 J^3}+{A_1^2 +3A_1^4\over 12 J^2}\Big)
+{11 A_1^4 \over 36J^3}+{A_1^2  \over 9J^3}-{A_1^2 A_2^2 \over 9J^3}  \\
&=& {3A_1^2A_2+2A_1^4A_2 \over 18 J^4} 2\hbox{Re}(\gamma_2^2)+ {-6A_1^6 -4A_1^2 A_2^2+A_1^2 \over 36 J^3}.
\end{eqnarray*}
Let
\begin{eqnarray*}
L_2:&=& -{A_1 \over 3J} (  E_{\bar1 \bar1}+E_{11})+{A_1^2 \over 3J^2} 2\hbox{Re}(\gamma_{\bar2} E_2) +{1+A_1^2 \over 3J}E_{2\bar2}.
\end{eqnarray*}

Notice that $\d_1E(z_0)=\d_1J(z_0)=\gamma_1(z_0)=\gamma_{i j\bar k}(z_0)=0$ and
$$
f_{11}(z_0)=-A_1^3|\gamma_2|^2+2A_1A_2(\gamma_2^2+A_1^2 \gamma_{\bar2}^2).
$$
Thus,
\begin{eqnarray*}
E_{11}(z_0)&=&-{f_{11}(z_0)\over 12 J^3} +A_1{f(z_0)\over 4 J^4} \\
&=&-{-A_1^3|\gamma_2|^2+2A_1A_2(\gamma_2^2+A_1^2 \gamma_{\bar2}^2)
\over 12 J^3}+{A_1^3  \over 4 J^2}\\
&=&-{A_1A_2(\gamma_2^2+A_1^2 \gamma_{\bar2}^2)
\over 6  J^3}+{A_1^3  \over 3 J^2}.
\end{eqnarray*}
Therefore,
$$
E_{11}(z_0)+E_{\bar1\bar1}(z_0)=-{A_1A_2 (1+A_1^2) (\gamma_2^2+ \gamma_{\bar2}^2)
\over 6  J^3}+{2A_1^3  \over 3 J^2},
$$

\begin{eqnarray*}
L_2:&=& -{A_1 \over 3J} ( -{A_1A_2 (1+A_1^2) (\gamma_2^2+ \gamma_{\bar2}^2)
\over 6  J^3}+{2A_1^3  \over 3 J^2})\\
&&+{A_1^2 \over 3J^2} \Big( {A_1^2 A_2 2\hbox{Re} \gamma_2^2 \over 12J^2}-{A_1^2\over 6J} \Big)\\
&& +{1+A_1^2 \over 3J}\Big({ A_1^2 A_2 (\gamma_2^2+ \gamma_{\bar2}^2 ) \over 6 J^3}
+{ -4A_1^4 +3A_1^2A_2^2 -A_1^2  \over 12 J^2}\Big)\\
&=& {2 A_1^2A_2(1+A_1^2) +A_1^4A_2+2A_1^2A_2(1+A_1^2)\over 36 J^4}  2\hbox{Re}(\gamma_2^2) \\
&&+{ -4A_1^6 +3A_1^4A_2^2 +3A_1^2A_2^2-15A_1^4-A_1^2 \over 36 J^3}.
\end{eqnarray*}
Moreover,
\begin{eqnarray*}
L_3:&=& -{2\over 3J} 2\hbox{Re}(\gamma_{\bar2} E_{2 1\bar1})-{A_1\over 3J} 2\hbox{Re}(\gamma_{\bar2} E_{2 \bar1\bar1})\\
&=&-{2\over 3J} \Big({(3A_1^2A_2^2+A_1^2+A_2^2+3A_1^4) \over 6 J^2} + A_1^2A_2^2{ 2\hbox{Re} \gamma_{\bar2}^4  \over 2 J^4} 
+{ (A_1^4 A_2 -3A_1^2 A_2) \over 2 J^3} 2\hbox{Re}( \gamma_{\bar2} ^2)
\Big)\\
&&-{A_1\over 3J}\Big({(2A_1 A_2^2 
+ 4A_1^3+6A_1^3A_2^2 -A_1^4A_2)  \over 6 J^2}\\
&&\qquad\qquad +{(-8A_1^3 A_2- 4A_1 A_2)   \over 12 J^3} 2\hbox{Re}(\gamma_2^2) +{2A_1A_2^2\over 4J^4} 2\hbox{Re}( \gamma_{\bar2}^4)\Big) \\
&=& -{6A_1^4 A_2^2 -A_1^5A_2+8A_1^2A_2^2+2A_1^2+2A_2^2+10A_1^4 \over 18 J^3}\\
&&+{-A_1^4 A_2+10 A_1^2 A_2   \over 9 J^3} 2\hbox{Re}(\gamma_2^2) -{A_1^2A_2^2\over 2 J^5} 2\hbox{Re}( \gamma_{\bar2}^4).
\end{eqnarray*}
Therefore,
\begin{eqnarray*}
F_{1\bar1}&=& L_1+L_2+L_3+{E_{1\bar1 1 \bar1} \over 3}\\
&=& {-10A_1^6 +A_1^5A_2 -9A_1^4A_2^2 -35 A_1^4 -17 A_1^2 A_2^2 -4A_1^2-4A_2^2\over 36 J^3} \\
&& +{3A_1^4 A_2+47 A_1^2 A_2   \over 36 J^3} 2\hbox{Re}(\gamma_2^2) -{A_1^2A_2^2\over 2 J^5} 2\hbox{Re}( \gamma_{\bar2}^4)\\
&&-{6A_1^6 +2A_1^4 A_2^2 +12A_1^4+3A_1^3+4A_1^2A_2^2 \over 18 J^3}\\
&&+{(8A_1^3A_2 +2A_1^2A_2(1+A_1^2)) \over 12J^4} 2\hbox{Re}(\gamma_2^2)\\
&=& {-16A_1^6 +A_1^5A_2 -11A_1^4A_2^2 -47 A_1^4 -21 A_1^2 A_2^2 -3A_1^3 -4A_1^2-4A_2^2\over 36 J^3} \\
&& +{5A_1^4 A_2+8A_1^3A_2+49 A_1^2 A_2   \over 36 J^3} 2\hbox{Re}(\gamma_2^2) -{A_1^2A_2^2\over 2 J^5} 2\hbox{Re}( \gamma_{\bar2}^4)\\
\end{eqnarray*}

\subsection{End of the proof of Theorem 1.1}

Since $\gamma_1(z_0)=0$ and $E(z_0)=-A_1^2/(12J)$, one has
\begin{eqnarray*}
{\cal A}(z_0)
&=&6 E(F-E^2) +(4F+2E^2) {1 \over J} +{2E \over J} 2\hbox{Re} (\gamma_{\bar2} E_2) \\
&&+ {1\over J} \Big( 2 2\hbox{Re} (\gamma_{\bar2} F_2)   -2E |\gamma_2|^2 E_{1\bar1} -(E_{1\bar1}+E_{2\bar2})
-|\gamma_2|^2 F_{1\bar1}\Big)\\
&&+{1\over 3J} {A_1^2 \over J} 2\hbox{Re} (\gamma_{\bar2} E_2)+{4E \over 3 J}  {A_1^2 \over J} \\
&=&-{A_1^2 \over 2J} (F-E^2) +{2\over J} (2F-7E^2)  -{A_1^2 \over 6J^2 } 2\hbox{Re} (\gamma_{\bar2} E_2) \\
&&+ {2 \over J} 2\hbox{Re} (\gamma_{\bar2} F_2)   +{A_1^2-6 \over 6 J} E_{1\bar1} -{E_{2\bar2} \over J} 
-F_{1\bar1}.\\
\end{eqnarray*}
Since
\begin{eqnarray*}
K_1:&=&-{A_1^2 \over 2J} (F-E^2)+{2\over J}(2F-7E^2)\\
&=&-{A_1^2 \over 2J}\Big( {7 A_1^4 \over 72 J^2}  -A_1^2A_2 {\gamma_2^2+\gamma_{\bar2}^2\over 9 J^3} -{A_1^4\over 144J^2}\Big)\\
&&+{2\over J} \Big( {7 A_1^4 \over 36 J^2} -2A_1^2A_2 {\gamma_2^2+\gamma_{\bar2}^2\over 9 J^3} -{7A^4\over 144 J^2}\Big)\\
&=&-{13 A_1^6\over 288 J^3}+{7 A_1^4\over 24J^3}-{A_1^2A_2(A_1^2-8)\over 18 J^4} (\gamma_2^2+\gamma_{\bar2}^2)
\end{eqnarray*}
and
\begin{eqnarray*}
K_2:&=&-{A_1^2\over 6J^2} 2\hbox{Re}(\gamma_{\bar2} E_2)+{2\over J} 2\hbox{Re}(\gamma_{\bar2} F_2)\\
&=&-{A_1^2\over 6J^2} \Big({A_1^2 (A_2 2\hbox{Re} \gamma_2^2-2J) \over 12J^2}  \Big)\\
&&+ {2\over J}\Big({5A_1^4+3A_1^2A_2^2+7A_1^2A_2+A_1^2 +5A_2^2 \over 18 J^2} \nonumber\\
&&\qquad \qquad +{7A_1^4A_2-27A_1^2A_2\over 36 J^3} 2\hbox{Re}(\gamma_2^2)+4{A_1^2A_2^2 \over 18J^4} 2\hbox{Re}(\gamma_{\bar2}^4)\Big)\\
 &=&\Big({27A_1^4A_2 -108A_1^2 A_2\over 72 J^4}\Big) 2\hbox{Re}(\gamma_2^2)
 +{4 A_1^2 A_2^2 \over 9J^5} 2\hbox{Re}(\gamma_2^4)\\
 &&+{20A_1^4 +12 A_1^2A_2^2+28A_1^2A_2+ 5A_1^2+20A_2^2 \over 36 J^3}.
\end{eqnarray*}
Therefore,
\begin{eqnarray*}
K_1+K_2
&=&{23A_1^4A_2 -76 A_1^2 A_2\over 72 J^4} 2\hbox{Re}(\gamma_2^2)
 +{4 A_1^2 A_2^2 \over 9J^5} 2\hbox{Re}(\gamma_2^4)\\
 &&-{13 A_1^6\over288 J^3} +{61A_1^4\over 72 J^3} +{12 A_1^2A_2^2+28A_1^2A_2+ 5A_1^2+20A_2^2 \over 36 J^3}.
\end{eqnarray*}

Let
$$
K_3:={A_1^2-6\over 6J} E_{1\bar1}-{E_{2\bar2}\over J}.
$$
Then 
\begin{eqnarray*}
K_3
&=&{A_1^2-6\over 6J} \Big(  -{A_1^2 A_2(\gamma_2^2+\gamma_{\bar2}^2)\over 6 J^3}+{A_1^2 +3A_1^4\over 12 J^2}\Big)\\
&&-{1 \over J}\Big(  -{A_1^2 A_2(\gamma_2^2+\gamma_{\bar2}^2)\over 6 J^3}+{A_1^2 +3A_1^4\over 12 J^2}\Big) \\
&=& {-A_1^4A_2+12A_1^2A_2 \over 36 J^4} (\gamma_2^2+\gamma_{\bar2}^2)+{3A_1^6-35A_1^4-12A_1^2 \over 72 J^3}.
\end{eqnarray*}
Therefore,
\begin{eqnarray*}
\lefteqn{ K_1+K_2+K_3}\\
&=&{21A_1^4A_2 -52 A_1^2 A_2\over 72 J^4} 2\hbox{Re}(\gamma_2^2)
 +{4 A_1^2 A_2^2 \over 9J^5} 2\hbox{Re}(\gamma_2^4)\\
 &&-{A_1^6\over288 J^3} +{13A_1^4+12 A_1^2A_2^2+28A_1^2A_2-A_1^2+20A_2^2 \over 36 J^3}.
\end{eqnarray*}
Thus,
\begin{eqnarray*}
{\cal A}&=&K_1+K_2+K_3-F_{1\bar1}\\
&=&{21A_1^4A_2 -52 A_1^2 A_2\over 72 J^4} 2\hbox{Re}(\gamma_2^2)
 +{4 A_1^2 A_2^2 \over 9J^5} 2\hbox{Re}(\gamma_2^4)\\
 &&-{A_1^6\over288 J^3}+{13A_1^4+ 12 A_1^2A_2^2+28A_1^2A_2-A_1^2+20A_2^2 \over 36 J^3}\\
 && -{-16A_1^6 +A_1^5A_2 -11A_1^4A_2^2 -47 A_1^4 -21 A_1^2 A_2^2 -3A_1^3 -4A_1^2-4A_2^2\over 36 J^3} \\
&& -{5A_1^4 A_2+8A_1^3A_2+49 A_1^2 A_2   \over 36 J^3} 2\hbox{Re}(\gamma_2^2) +{A_1^2A_2^2\over 2 J^5} 2\hbox{Re}( \gamma_{\bar2}^4)\\
&=&  {(127/8)A_1^6 -A_1^5A_2 +11A_1^4A_2^2 +60 A_1^4 +33 A_1^2 A_2^2 +3A_1^3 +28A_1^2A_2 +3A_1^2+24A_2^2\over 36 J^3} \\
&& +{11A_1^4 A_2-16A_1^3A_2-150A_1^2 A_2   \over 72 J^3} 2\hbox{Re}(\gamma_2^2) +{17A_1^2A_2^2\over 18 J^5} 2\hbox{Re}( \gamma_{\bar2}^4).
\end{eqnarray*}
Write
$$
\gamma_2(z_0)^2=|\gamma_2|^2 e^{i\theta}=J(z_0) e^{i\theta}.
$$
Then
\begin{eqnarray*}
\lefteqn{36J^3 {\cal A}(z_0)}\\
&=&  (127/8)A_1^6 -A_1^5A_2 +11A_1^4A_2^2 +60 A_1^4 +33 A_1^2 A_2^2 +3A_1^3 +28A_1^2A_2 +3A_1^2+24A_2^2 \\
&& +(11A_1^4 A_2-16A_1^3A_2-150A_1^2 A_2 ) \cos (\theta) +68 A_1^2A_2^2 \cos(2\theta)\\
 &=&0.
\end{eqnarray*}
For 
$$
z_0\in X=\{z_2\in \CC: |z_2|^2+A_2\hbox{Re}(z_2^2)=1\}
$$
$\theta $ is not constant. This implies that $J^3 {\cal A}(z_0)=0$ if and only if
$$
 (127/8)A_1^6 -A_1^5A_2 +11A_1^4A_2^2 +60 A_1^4 +33 A_1^2 A_2^2 +3A_1^3 +28A_1^2A_2 +3A_1^2+24A_2^2=0,
 $$
 $$
11A_1^4 A_2-16A_1^3A_2-150A_1^2 A_2 =0
$$
and
$$
68 A_1^2A_2^2 =0.
$$
This couping with $A_1\ge 0$ imply that
$$
A_1=A_2=0.
$$

Theorem \ref{main} is completely proved now.

\section{Ramadanov Conjecture via Tresse invariants}

 \subsection{The method of associated differential equations}
 The study of the relationship between the geometry of 
 real hypersurfaces in $\CC^{2}$ and 2nd order ODEs 
  \begin{equation}\label{wzz}
 w\rq{}\rq{}=\Phi(z,w,w\rq{}).
 \end{equation}
 goes back to Segre \cite{segre}
 and Cartan \cite{cartan},
 see also Webster \cite{webster}.
More generally,  the geometry of a real hypersurface in $\CC^{n},\,n\geq 3$, is related to
 that of a complete second order system of PDEs
\begin{equation}\label{wzkzl}
w_{z_kz_l}=\Phi_{kl}(z_1,...,z_n,w,w_{z_1},...,w_{z_{n-1}}),\quad \Phi_{kl}=\Phi_{lk},\quad k,l=1,...,n-1,
\end{equation}
  Moreover, {\em in the real-analytic case},
  this relation becomes 
more
explicit 
by means of the Segre family.
Namely,
to any real-analytic Levi-nondegenerate hypersurface $M\subset\CC^{n},\,n\geq 2$, one can uniquely associate a holomorphic ODE \eqref{wzz} ($n=2$) or a holomorphic PDE system \eqref{wzkzl} ($n\geq 3$),
whose solutions are precisely the Segre varieties.
The Segre family of $M$ plays a role of a 
``mediator'' between the hypersurface and the associated differential equations.  
It is, however, only defined in the real-analytic case.
For recent work on associated differential equations in the degenerate setting, see e.g. the papers  
  \cite{divergence, nonminimalODE, nonanalytic} of the first author with Lamel and Shafikov.

  The associated differential equation procedure is particularly simple in the case of a Levi-nondegenerate hypersurface in $\CC^{2}$. In this case the Segre family is an
 anti-holomorphic 
 2-parameter family of complex holomorphic curves. 
 It then follows from the standard ODE theory that there exists a unique 2nd order ODE \eqref{wzz}, for which the Segre varieties are precisely the graphs of solutions. This ODE is called \it the associated ODE. \rm

In general, both right hand sides in \eqref{wzz} and \eqref{wzkzl} appear as functions determining the $2$-jet of a Segre variety 
at a given point
as a function of the $1$-jet at the same point.
More explicitly, we use coordinates
$$
	(z,w)=(z_1, \ldots, z_{n-1},w) \in \CC^{n-1}\times \CC{} = \CC^{n}.
$$ 
Fix a 
 real-analytic
hypersurface
 $M\subset\CC^{n}$
 passing through the origin, 
 and choose a 
sufficiently small neighborhood $U$
 of the origin.
  When $M$ is Levi-nodegenerate, we can associate a 
{\em complete second order system of holomorphic PDEs}
 to $M$,
which is uniquely determined by the condition that the differential equations are satisfied by all the
graphing functions $h(z,\zeta) = w(z)$ of the
Segre varities $\{Q_\zeta\}_{\zeta\in U}$ of $M$ in a
neighbourhood of the origin.

To be more precise, we consider a
so-called {\em  complex defining
 equation } (see, e.g., \cite{ber}),
$$
w=\rho(z,\bar z,\bar w),
$$ 
of $M$ near the origin, which one
obtains by substituting 
$$u=\frac{1}{2}(w+\bar w),\,v=\frac{1}{2i}(w-\bar w)$$ 
into 
a real-analytic defining equation and
solving for $w$ as function of $(z,\bar z, \bar w)$
by the implicit function theorem.
 The Segre
variety $Q_x$ of a point 
$$x=(a,b)\in U,\quad
a\in\CC^{n},\,b\in\CC{},$$ 
is  now given
as the graph of the function
\begin{equation} 
\label{segredf}
w (z)=\rho(z,\bar a,\bar b), 
\end{equation}
where we slightly abuse the notation
using the same letter $w$ for 
both the last coordinate and
the function $w(z)$ defining a Segre variety.
Differentiating \eqref{segredf} we obtain
\begin{equation}\label{segreder} 
	w_{z_j}=\rho_{z_j}(z,\bar a,\bar b),
	\quad
	j=1,\ldots,n. 
\end{equation}
Considering \eqref{segredf} and \eqref{segreder}  as a holomorphic
system of equations with the unknowns $\bar a,\bar b$, 
in view of the Levi-nondegeneracy of $M$,
an
application of the implicit function theorem yields holomorphic functions
 ${\cal C}_1,...,{\cal C}_n, B$ such that
 \eqref{segredf} and \eqref{segreder} are solved by
$$
	\bar a_j={\cal C}_j(z,w,w'),\quad
	\bar b=B(z,w,w'),
$$
where we write
$$
	w' = (w_{z_1},  \ldots, w_{z_n}).
$$
The implicit function theorem applies here because the
Jacobian of the system coincides with the Levi determinant of $M$
for $(z,w)\in M$ (\cite{ber}). Differentiating \eqref{segredf} twice
and substituting the above solution for $\bar a,\bar b$ finally
yields
\begin{equation}\label{segreder2}
w_{z_kz_l}=\rho_{z_kz_l}(z,{\cal C} (z,w,w'),
B(z,w,w'))=:\Phi_{kl}(z,w,w'),
\quad
k,l=1, \ldots, n,
\end{equation}
or, more invariantly,
i.e.\ independent of the coordinate choice,
\begin{equation}\label{segreder2'}
	j^2_{(z,w)} Q_x = \Phi(x, j^1_{(z,w)} Q_x).
\end{equation}
Now \eqref{segreder2}
(or \eqref{segreder2'})
is the desired complete system of holomorphic second order PDEs
denoted by $\mathcal E = \mathcal{E}(M)$.
\smallskip

\noindent{\bf Definition.} We call $\mathcal E = \mathcal{E}(M)$  \it the system of PDEs 
 associated with $M$. \rm  

\subsection{Connecting differential operators in the $\rho$ and the $\Phi$ functions} 

It is convenient to consider the function $\Phi$ introduced above as one defined on the bundle $J^{1,n-1}$ of complex hypersurfaces in $\CC^{n}$ (which can be identified with the projectivized cotangent bundle of $\CC^n$). We consider the canonical section of this bundle over $M$
defined by the distribution of complex tangent spaces:
\beq
\label{th-section}
s_M^1\colon M\to J^{1,n-1}, 
\quad
q\mapsto (q, [T^\CC_q M]).
\eeq
Fixing a system of local holomorphic  coordinates
in $\CC^{n-1}\times\CC{}$ yields a (local) trivialization of the bundle $J^{1,n-1}$, and we can write
\beq
\label{th-def}
s_M^1(q) = (q, \th(q))
\in M\times \CC^{n-1} \subset M\times \CC \mathbb P^{n-1},
\eeq
where $\th=(\th_1,\ldots,\th_{n-1})$.
It is easy to express $\th$
in terms of a local defining function $\rho$ of $M$
with $\rho_w\ne 0$
in local holomorphic coordinates
\beq
\label{th-coor}
\th_j = -\frac{\rho_{z_j}}{\rho_w}, 
\ \  1\le j\le n-1,
\ \ 
(z,w)=(z_1,\ldots,z_{n-1},w)\in \CC^{n-1}\times \CC{}.
\eeq

We consider the image $M_J:=s^1_M(M)\subset J^{1,n-1}$. As observed by Webster, $M_J$ is a totally real submanifold in $J^{1,n-1}$ diffeomorphic to $M$. We can then treat $\Phi$ as a function on the above trivialization of $J^{1,n-1}$. Its restriction on $M_J$ (or equivalently on $M$) we call {\it the $\Phi$ function for $M$}. For an analytic $M$, the $\Phi$ function can be considered as an analytic function on either $M_J$ or $M$ itself. The function $\Phi$ can be calculated (see \cite{KZ1,KZ2})
in terms of a local defining function $\rho$ of $M$
with $\rho_w\ne 0$
by the formula
\begin{equation}\label{Phi00}
\Phi = (\Phi_{ij}),
\quad
	\Phi_{ij}=\frac{1}{(\rho_w)^3}
\begin{vmatrix} 0 & \rho_{z_j} & \rho_w\\ \rho_{z_i} & \rho_{z_iz_j} & \rho_{z_iw} \\ \rho_w & \rho_{z_jw} & \rho_{ww} \end{vmatrix},
 \quad i,j=1,...,n-1.
\end{equation}

As shown in \cite{KZ2}, one can then express (for $M$ analytic and even in a more general setup, see \cite{KZ2}) differential operators acting on $\Phi$ considered as a function on $J^{1,n-1}$ as those acting on $\Phi$ considered as a function on $M$, and subsequently as certain differential operators acting on the real defining function $\rho$. We describe below the procedure. 

{\bf Let us consider from now on the case $n=2$}, and introduce the symbolic notation:
\begin{equation}\label{Jzwab}
	J(z,w,a,b):=
\begin{vmatrix} 0 & \rho_{a} & \rho_b\\ \rho_{z} & \rho_{za} & \rho_{zb} \\ \rho_w & \rho_{wa} & \rho_{wb} \end{vmatrix}.
\end{equation}
Note that
$$J_0:=J(z,w,\bar z,\bar w)$$
is proportional to the Levi determinant of $M$ and hence is nonvanishing, and also that, from the above, 
\beq\label{Phi=}\Phi=\frac{J(z,w,z,w)}{(\rho_w)^3}.\eeq

Following \cite{KZ2}, the operator $L_\xi$ on $M$, corresponding to the differentiation $\frac{\partial}{\partial\xi}$, it is obtained from the conditions
$$L_\xi(z)=L_\xi(w)=L_\xi(\bar z)=L_\xi(\bar w)=L_\xi(\bar\xi)=0,\quad L_\xi(\xi)=1$$
(where we subject $\xi=\theta$, according to the defining equations of $M_J$). Using the above six equations and the fact that $L_\xi$ is a section of $\CC{}TM$, a direct calculation gives: 
\beq\label{Lxi}
L_\xi=\frac{(\rho_w)^2}{J_0}(\rho_{\bar w}\partial_{\bar z}-\rho_{\bar z}\partial_{\bar w}).
\eeq
Similarly, we find sections $L_z$ and $ L_w$ of $\CC{}TM$, corresponding respectively to the operators $\partial_{z}$ and $\partial_w$, and satisfying, respectively,
$$L_z(\xi)=L_z(w)=L_z(\bar z)=L_z(\bar w)=L_z(\bar\xi)=0,\quad L_z(z)=1$$
and
$$L_w(\xi)=L_w(z)=L_w(\bar z)=L_w(\bar w)=L_w(\bar\xi)=0,\quad L_w(w)=1,$$
where we again subject $\xi=\theta$  according to the defining equations of $M_J$.
 Introducing the notations
$$J_1:=J(z,w,z,\bar w), \quad J_2:=J(z,w,\bar z,z), 
$$
$$
 J_3:=J(z,w,w,\bar w)\quad\hbox{and}\quad  J_4:=J(z,w,\bar z,w),$$
we argue as above and compute:
\beq\label{LzLw}
L_z=\partial_z-\frac{J_1}{J_0}\partial_{\bar z}-\frac{J_2}{J_0}\partial_{\bar w}, \quad 
L_w=\partial_w-\frac{J_3}{J_0}\partial_{\bar z}-\frac{J_4}{J_0}\partial_{\bar w}.
\eeq
Now, {\it formulas \eqref{Phi=},\eqref{Lxi},\eqref{LzLw} allow for expressing any holomorphic differential polynomial in $\Phi$, viewed  as a function on $J^{1,n-1}$, in terms of the operators $L_z,L_w,L_\xi$ and the real defining function $\rho$ of $M$.}

Precisely, for any holomorphic function $\Psi(z,w,\xi)$ in an open neighborhood of $M_J$ and its restriction $\psi:=\Psi|_{M_J}$, it holds that:
\beq\label{transfer}\Psi_z=L_z(\psi),\quad \Psi_w=L_w(\psi),\quad \Psi_\xi=L_\xi(\psi),
\eeq
where we identify $M_J$ with $M$ here.

\subsection{Obstruction function via Tresse invariants}

In this section, we compare two families of relative CR invariants associated with a Levi nondegenrate CR hypersurface in $\CC^{2}_{z,w}.$ Recall that by a relative invariant we mean a function on a manifold being multiplied by a nonzero factor after an arbitrary (in our case CR) coordinate change.  

The first family is formed of the coefficients of the Chern-Moser normal form of a CR hypersurface introduced in \cite{CM74}. The normal form at a point $x\in M$ looks as:
$$\Re(w)=z\overline{z}+\sum_{a\geq 2,b\geq 2,c\geq 0 }N^c_{a,b}z^a\overline{z}^b\Im(w)^c,$$
where $N^c_{2,2}=N^c_{2,3}=N^c_{3,3}=0$ and $N^c_{a,b}=\overline{N^c_{b,a}}$ for all $c$. With the CR weight system setup as
$$
z\mapsto \lambda z,\quad \overline{z}\mapsto \mu \overline{z},\quad w\mapsto \lambda\mu \overline{w},
$$
$N^c_{a,b}$ transforms as $\lambda^{a+c-1}\mu^{b+c-1}$ that is has CR weight $(a+c-1,b+c-1)$ as a relative invariant.

According to Graham \cite[Remark 4.13]{Gr87}, {\em the value of the obstruction function of $M$ at $x$ we are interested in is (proportional to) the coefficient $N^0_{4,4}$ that is a relative invariant of the CR weight $(3,3)$.}

The second family of invariants consists of the {\em Tresse invariants} \cite{Tr96} of the associated second order ODE $$w''=\Phi(z,w,p),\,p=w'.$$ Let us denote $$\Phi^c_{a,b}=\partial_z^a\partial_w^b\partial_p^c\Phi(z,w,p),\,\, \Phi_{a,b}=\partial_z^a\partial_w^b\Phi(z,w,p),\,\,\Phi^c=\partial_p^c\Phi(z,w,p).$$ The Tresse invariants are generated by two basic invariants of order $4$:
 one is
$$I:=\Phi^4,$$
and other one is
\begin{equation} \label{Hformula}\begin{aligned}  H:=&\Phi^2_{2,0}+(\Phi)^2 \Phi^4+2\Phi\Phi^3_{0,1}p-\Phi^2_{0,1}p\Phi^1+\Phi_{0,1}\Phi^3p+\Phi^2_{0,2}p^2\\
&-3\Phi^2_{0,1}\Phi+2\Phi^3_{1,0}\Phi
 +\Phi^3\Phi_{1,0}-\Phi^1\Phi^2_{1,0}+4\Phi^1\Phi^1_{0,1}-3\Phi^2\Phi_{0,1}\\
 &+2p\Phi^2_{1,1}-4\Phi^1_{0,2}p-4\Phi^1_{1,1}+6\Phi_{0,2} \end{aligned} \end{equation}
together with invariant derivations $\Delta_z,\Delta_w,\Delta_p$ (on generic strata $H$ and the derivations are enough). According to \cite{Kr09} the weights of these invariants and operations are given by a weight system setup as
\beq\label{scaling}
z\mapsto \tilde \lambda z,\quad w\mapsto \tilde \mu w,\quad p=w'\mapsto \frac{\tilde \mu}{\tilde \lambda}p, \quad q=w''\mapsto \frac{\tilde \mu}{\tilde \lambda^2}q.
\eeq
One can see from \eqref{scaling} that $I$ has the weight $(-2,3)$, $H$ has the weight $(2,1)$, $\Delta_z$ has the weight $(1,0)$, $\Delta_w$ has the weight $(0,1)$ and $\Delta_p$ has the weight $(-1,1).$ Let us start by determining the CR weights of these invariants.

\begin{proposition}\label{weightcheck}
The Tresse invariant of weight $(r,s)$ is relative CR invariant of CR weight $(r+s,s)$. Moreover, the Tresse invariant of weight $(r,s)$ of the lowest order $k$ vanishes if and only if $N^{2s-k+r}_{1-s+k,1-s+k-r}$ vanishes.
\end{proposition}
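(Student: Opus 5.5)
We compare the two scaling actions on the associated ODE and then read off the vanishing criterion on a normalized hypersurface.

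\textbf{Weights.} First we restrict the Tresse scaling \eqref{scaling} to the maps that come from CR automorphisms of the model. The quadric $\Re w = z\bar z$ is sent to itself, up to the normalization of $w$, by $z\mapsto\lambda z$, $w\mapsto\lambda\mu w$ (formally $\bar z\mapsto \mu\bar z$). Its Segre varieties are $w = 2z\bar a - \bar b$, and the associated ODE is $w''=0$. Under such a scaling the holomorphic coordinates of the ODE transform with $\tilde\lambda=\lambda$ and $\tilde\mu=\lambda\mu$.

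A Tresse invariant of Tresse weight $(r,s)$ is multiplied by $\tilde\lambda^r\tilde\mu^s=\lambda^{r+s}\mu^s$. Hence it has CR weight $(r+s,s)$.

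For example, $I$ gets CR weight $(1,3)$ and $H$ gets $(3,1)$. As a consistency check, $H\bar H$ (or $\Delta$-derivatives of $I$) should have weight $(4,4)$.

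To make this rigorous rather than heuristic, we must check that a relative invariant of the ODE, evaluated on $M_J$ via \eqref{transfer}, is a relative CR invariant. This holds because CR maps of $M$ extend to biholomorphisms that preserve the Segre family, hence the associated ODE. The weight is then fixed by the action of the isotropy of the normal form, and it suffices to test that action on the $\lambda,\mu$ scalings.

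\textbf{Vanishing criterion.} Work at $x=0$ in Chern--Moser normal form. Complexify with $\bar z\to\zeta$ and $\bar w\to\eta$, solve for the Segre family $w=\rho(z,\zeta,\eta)$, and compute $\Phi$ via \eqref{Phi00}. The model contributes $\Phi\equiv0$. Each normal-form term $N^c_{a,b}z^a\zeta^b(\Im w)^c$ therefore contributes to $\Phi$ a term that is linear in $N^c_{a,b}$ modulo quadratic terms in higher-weight coefficients.

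Eliminating $\zeta$ and $\eta$ via $p=w'$ and $w$, as in the Segre construction, gives the leading part
\[
\Phi \sim \sum N^c_{a,b}\, z^{a-2}p^{b}w^{c}\cdots .
\]
Exact exponents come from tracking weights: $\zeta$ behaves like $p$ and $\eta$ like $w$, modulo $z$. The Tresse invariant of order $k$ and weight $(r,s)$, evaluated at the origin, is a specific jet component $\Phi^{c'}_{a',b'}$ with $a'+b'+c'=k-2$, plus nonlinear corrections that vanish at $0$ for the normal form. Matching the weight $(r+s,s)$ with the weight $(a+c-1,b+c-1)$ of $N^c_{a,b}$, and the order $k$ with the total jet order $a+b+2c$, gives a linear system:
\[
a+c-1 = r+s,\qquad b+c-1 = s,\qquad a+b+2c \text{ determined by } k .
\]
Solving it should yield $c = 2s-k+r$, $a = 1-s+k$, and $b = 1-s+k-r$, as stated.

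\textbf{Main obstacle.} The key claim is that the lowest-order invariant depends \emph{only linearly} on exactly that coefficient, with nonzero constant, and that the contributions of other coefficients vanish because of the normal-form conditions $N^c_{2,2}=N^c_{2,3}=N^c_{3,3}=0$. The lowest-order assumption handles this: any other coefficient of the same weight has lower order, so it either vanishes by normalization or appears in a lower-order invariant.

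We would verify the nonzero constant explicitly for $I=\Phi^4$, which should pick out $N^0_{2,4}$ up to a numerical factor. We would do the same for $H$, which should pick out $N^0_{4,2}$ via the term $\Phi^2_{2,0}$. The general case then follows by induction, applying the invariant derivations $\Delta_z,\Delta_w,\Delta_p$, each of which raises the order by one and shifts the weight in a way compatible with the formula.

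Keeping track of the order normalization $k$ relative to the jet order is where sign and offset errors are most likely. So we would calibrate the formula on $I$ and $H$ first, and adjust the bookkeeping of $k$ to match those two cases.
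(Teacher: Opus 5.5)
Your treatment of the weights matches the paper's argument: $\tilde\lambda=\lambda$ and $\tilde\mu=\lambda\mu$ from $z\mapsto\lambda z$, $w\mapsto\lambda\mu w$, $p\mapsto\mu p$. Your remark that CR maps extend to Segre-preserving biholomorphisms is a welcome justification of that step.

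The gap is in the second part, where the whole content is the index formula. You pair the weight equations $a+c-1=r+s$, $b+c-1=s$ with the order $a+b+2c$. But $a+b+2c=(a+c)+(b+c)=r+2s+2$ is already forced by the two weight equations. It is the weighted degree of $z^a\bar z^b(\Im w)^c$, i.e.\ the sum of the CR weights plus $2$. So your third equation is either redundant or contradicts $k$, and $c$ stays free. Nothing in your system produces $c=2s-k+r$; ``solving it should yield \dots\ as stated'' is an assertion, not a derivation.

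The independent relation the paper uses is the \emph{unweighted} order. $N^c_{a,b}$ is a Taylor coefficient of order $a+b+c$, and $\Phi=w''$ absorbs two derivatives, so $a+b+c-2=k$. With the weight equations this gives uniquely $c=r+2s-k$, $a=k+1-s$, $b=k+1-s-r$. Your own heuristic $\Phi\sim\sum N^c_{a,b}z^{a-2}p^bw^c$ already gives this: it pairs $N^c_{a,b}$ with $\Phi^{b}_{a-2,c}$, which carries $a+b+c-2$ derivatives. You then write $a'+b'+c'=k-2$, which is off by two in the statement's convention, where $I=\Phi^4$ has order $4$.

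Your planned calibration on $I$ and $H$ would not catch this. They correspond to $N^0_{2,4}$ and $N^0_{4,2}$, which you identify correctly, but for $c=0$ the counts $a+b+c$ and $a+b+2c$ coincide. The discrepancy appears once $c>0$. For weight $(-2,4)$ and order $5$ (e.g.\ $\Delta_w I$), the statement predicts $N^1_{2,4}$, whereas the weights force $a+b+2c=8\neq k+2=7$.

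Your ``main obstacle'' paragraph also only makes sense with the unweighted count. With $a+b+2c$, every coefficient of a fixed weight has the same order, so ``the other coefficients have lower order'' cannot hold. With $a+b+c$ it becomes coherent:
\begin{itemize}
\item coefficients of the same weight with smaller $c$ have higher order and cannot enter an order-$k$ invariant;
\item those with larger $c$ must be normalized away, as happens for $N^0_{4,4}$, whose competitors $N^1_{3,3}$ and $N^2_{2,2}$ vanish in normal form.
\end{itemize}
The remaining point is that the lowest-order invariant equals $N^c_{a,b}$ up to a nowhere-vanishing factor. The paper simply asserts this. Your linearization at the origin would add rigor there, but only if the nonzero constants are actually checked rather than obtained by an unverified induction on $\Delta_z,\Delta_w,\Delta_p$.
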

\begin{proof}
Note that the CR weights define the weighting system:
$$
z\mapsto \lambda z,\quad w\mapsto \lambda \mu w,\quad p\mapsto \mu p
$$
(recall that $p=dw/dz$).
Thus $ \tilde  \lambda=\lambda$ and $\tilde \mu= \lambda \mu$ and the Tresse invariant of weigth $(r,s)$ is relative CR invariant of CR weight $(r+s,s)$.

Let us take the order of the invariant into account: $N^c_{a,b}$ is of order $a+b+c$, while $\Phi$ is obtained by two derivations, i.e., $a+b+c-2=k$. Among the Tresse invariant of weigth $(r,s)$ there is unique one of the lowest order $k$ and thus it is  $N^c_{a,b}$ with $a+b+c-2=k,a+c-1=r+s,b+c-1=s$ 
up to nowhere vanishing absolute invariant, i.e., it vanishes if and only if the Tresse invariant vanishes.
\end{proof}

Let us remark that if one finds out which Tresse invariants of weight $(r,s)$ of lower order than $k$ pose as correction terms for the Tresse invariant of weigth $(r,s)$ of general order $k$ to obtain $N^c_{a,b}$, then the Proposition \ref{weightcheck} could be extended to general order invariants.

Consequently, for the vanishing of the obstruction function, we are looking for Tresse invariant of weight $(0,3)$ of order $6$, which we  find to be $\Omega^4_{2,0}$ in the notations of \cite{Tr96}.

In this way, we have obtained
\begin{theorem}\label{reduction}
The obstruction function of a real-analytic Levi nondegenrate CR hypersurface $M\subset\CC^{2}$ vanishes if and only if for the associated ODE $w''=\Phi(z,w,w')$ is holds that
\begin{multline*}
0=\Omega^4_{2,0}=(\partial_p)^2H=\Phi^4_{2,0}+
4\Phi^5\Phi^1\Phi+\Phi^5\Phi_{0,1}p+2\Phi^4 (\Phi^1)^2+2\Phi^4\Phi^2\Phi+2\Phi^4p\Phi^1_{0,1}\\
+\Phi^6 (\Phi)^2+3\Phi^1 \Phi^4_{0,1}p
+2\Phi^5_{0,1}\Phi p+\Phi^4_{0,2}p^2+\Phi^5\Phi_{1,0}-\Phi^4\Phi_{0,1}\\
+2\Phi^4\Phi^1_{1,0}+3\Phi^4_{1,0}\Phi^1+2\Phi^5_{1,0}\Phi+\Phi^4_{0,1}\Phi+2\Phi^4_{1,1}p.
\end{multline*}
\end{theorem}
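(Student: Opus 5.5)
The plan is to combine Graham's identification of the obstruction with Proposition \ref{weightcheck}, and then to compute $\Omega^4_{2,0}$ explicitly.

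\textbf{Step 1: reduce to a normal form coefficient.} By Graham \cite[Remark 4.13]{Gr87}, the obstruction function at $x\in M$ is, up to a nonzero universal constant, the Chern--Moser coefficient $N^0_{4,4}$. This coefficient has CR weight $(3,3)$ and order $8$. The order of the corresponding Tresse invariant is then $k=8-2=6$.

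\textbf{Step 2: identify the matching Tresse weight.} In Proposition \ref{weightcheck} the CR weight is $(r+s,s)$. Setting $(r+s,s)=(3,3)$ gives $(r,s)=(0,3)$. As a consistency check, the formula $N^{2s-k+r}_{1-s+k,\,1-s+k-r}$ with $r=0$, $s=3$, $k=6$ returns $N^0_{4,4}$. So Proposition \ref{weightcheck} tells us that $N^0_{4,4}$ vanishes at $x$ if and only if the lowest-order ($k=6$) Tresse invariant of weight $(0,3)$ vanishes there. This invariant is unique up to a nowhere-vanishing absolute factor.

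\textbf{Step 3: exhibit that invariant as $\partial_p^2 H$.} I must show that $(\partial_p)^2H$, which is Tresse's $\Omega^4_{2,0}$, is a relative invariant of weight $(0,3)$ and order $6$.

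For the weight: $H$ has weight $(2,1)$ and $\Delta_p$ has weight $(-1,1)$. Applying $\Delta_p$ twice gives $(2,1)+2(-1,1)=(0,3)$.

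For the order: $H$ has order $4$ in $w$ (it involves $\Phi^2_{2,0}$), and each $p$-derivative raises the order by one, so $(\partial_p)^2H$ has order $6$.

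For invariance, I would use Tresse's structure results. The relative invariant $H$ is such that on the relevant stratum the invariant derivation $\Delta_p$ acts on $H$ by $\partial_p$ plus multiples of $H$ itself; this is the reason Tresse denotes $\partial_p^2H$ by $\Omega^4_{2,0}$. Hence $\partial_p^2 H$ differs from $\Delta_p^2H$ only by terms containing $H$ or $\partial_pH$. I expect those correction terms either to vanish identically or to be of a type that does not affect the vanishing locus.

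Once invariance is settled, uniqueness of the weight-$(0,3)$, order-$6$ invariant shows $\partial_p^2H$ is proportional to $N^0_{4,4}$. Combined with Step 1, this gives the stated equivalence.

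\textbf{Step 4: expand $\partial_p^2 H$.} Differentiate \eqref{Hformula} twice in $p$ with the Leibniz rule, using $\partial_p\Phi^c_{a,b}=\Phi^{c+1}_{a,b}$ and $\partial_p p=1$. For example:
\begin{itemize}
\item $(\Phi)^2\Phi^4$ produces $2(\Phi^1)^2\Phi^4+2\Phi\Phi^2\Phi^4+4\Phi\Phi^1\Phi^5+\Phi^2\Phi^6$;
\item $\Phi^2_{0,2}p^2$ produces $\Phi^4_{0,2}p^2+4p\Phi^3_{0,2}+2\Phi^2_{0,2}$.
\end{itemize}
Collecting terms, several contributions should cancel, such as $2\Phi^2_{0,2}$ against the terms from $-4\Phi^1_{0,2}p+6\Phi_{0,2}$. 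What remains should be the displayed expression.

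\textbf{Main obstacle.} The hard step is the invariance claim in Step 3, namely justifying that $\partial_p^2H$, rather than $\Delta_p^2H$ with its lower-order corrections, is exactly the relative invariant. I would settle it by citing Tresse \cite{Tr96} together with the weight computations in \cite{Kr09}. As a fallback, I would check directly that $\partial_p^2H$ transforms by a multiplier under fiber-preserving point transformations, together with the infinitesimal generators of the point group.
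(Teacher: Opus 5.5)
Your plan is the paper's proof. It uses Graham's identification of the obstruction with $N^0_{4,4}$, Proposition \ref{weightcheck} with $(r,s)=(0,3)$ and $k=6$, the citation of Tresse for $\Omega^4_{2,0}=(\partial_p)^2H$, and a double $p$-differentiation of \eqref{Hformula}; your Leibniz bookkeeping, including the cancellation of the $\Phi^2_{0,2}$ and $p\Phi^3_{0,2}$ terms, does give exactly the displayed formula. One thing to tidy up: your heuristic comparison with $\Delta_p^2H$ does not by itself give invariance, since correction terms involving $\partial_pH$ would change the zero set. The paper simply cites Tresse at this point; a self-contained substitute is that under $p\mapsto (W_z+W_wp)/(Z_z+Z_wp)$ the invariant $H$ behaves as a fibrewise density of weight one (it is linear in $p$ whenever $\Phi$ is cubic in $p$), so $\partial_p^2$ acts on it invariantly, just as $\partial_p^4$ does on $\Phi$.
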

\begin{proof}Following  \cite{Tr96}, the Tresse invariant $\Omega^4_{2,0}$ is the invariant of weight $(0,3)$ of order $6$ and this is the lowest order invariant among invariants  of weight $(0,3)$. Therefore, $\Omega^4_{2,0}$ vanishes if and only if the obstruction function vanishes by Proposition \ref{weightcheck}. Following \cite{Tr96}, we see that $\Omega^4_{2,0}=(\partial_p)^2H$ and obtain the claimed formula by differentiating \eqref{Hformula}.
\end{proof}

\medskip

\noindent{} {\bf Remark.} In fact, one can deduce Theorem \ref{reduction} by using the invariance of $\Omega^4_{2,0}$ under coordinate changes, and then checking the coincidence of $\Omega^4_{2,0}$ and the obstruction function in the normal form coordinates at a point directly.

\medskip

\subsection{Tresse invariants of ellipsoids}

In this section, we apply the outcomes of the preceding two sections in the case when $M$ is a real ellipsoid. Ellipsoids in $\CC^{2}$ form a two parameter family of CR hypersurfaces  given by:
$$
0=\rho(z,w,\overline{z},\overline{w})=z\overline{z}+w\overline{w}+\frac{A_1}{2}(z^2+\overline{z}^2)+\frac{A_2}{2}((w^2+\overline{w}^2))-1 
$$
parametrized by $a,b>0.$ 

According to \eqref{Phi=}, the corresponding $\Phi$ function is
$$\Phi:=\frac{1}{(\partial_w \rho)^3}J(z,w,z,w)=-\frac{A_1^2 A_2 z^2+A_1 A_2^2 w^2+2 A_1 A_2 w\overline{w}+2 A_1 A_2 z\overline{z}+A_1\overline{w}^2+A_2\overline{z}^2}{(A_2 w+\overline{w})^3}.$$

For the basic differentiations we have, in view of \eqref{transfer}, we have:
\begin{eqnarray*}
\partial_p\sim L_\xi&=&\frac{(\partial_w \rho)^2}{J(z,w,\overline{z},\overline{w})}((\partial_{\overline{w}} \rho)\partial_{\overline{z}}-(\partial_{\overline{z}} \rho)\partial_{\overline{w}})\\
&=&\frac{-(A_2 w+\overline{w})^2 ((A_2 \overline{w}+w)\partial_{\overline{z}}-(A_1\overline{z}+z)\partial_{\overline{w}})}{A_1^2 z\overline{z}+A_2^2 w\overline{w}+A_1\overline{z}^2+A_1z^2+A_2\overline{w}^2+A_2 w^2+w\overline{w}+z\overline{z}},\\
\end{eqnarray*}

\begin{eqnarray*}
\lefteqn{\partial_z\sim L_z}\\
&=&\partial_z-\frac{J(z,w,z,\overline{w})}{J(z,w,\overline{z},\overline{w})}\partial_{\overline{z}}-\frac{J(z,w,\overline{z},z)}{J(z,w,\overline{z},\overline{w})}\partial_{\overline{w}}\\
&=&
\partial_z-\frac{(A_1 A_2^2 w\overline{w}+A_1 A_2\overline{w}^2+A_1 A_2 w^2+A_1w\overline{w}+(A_1z+\overline{z})^2)\partial_{\overline{z}}-(A_2 w+\overline{w})\overline{z}(A_1^2-1)\partial_{\overline{w}}}{A_1^2 z\overline{z}+A_2^2 w\overline{w}+A_1\overline{z}^2+A_1z^2+A_2 \overline{w}^2+A_2 w^2+w\overline{w}+z\overline{z}}\\
\end{eqnarray*}
and
\begin{eqnarray*}
\lefteqn{\partial_w\sim L_w}\\
&=&\partial_w-\frac{J(z,w,w,\overline{w})}{J(z,w,\overline{z},\overline{w})}\partial_{\overline{z}}-\frac{J(z,w,\overline{z},w)}{J(z,w,\overline{z},\overline{w})}\partial_{\overline{w}}\\
&=&
\partial_w-\frac{((A_1 z+\overline{z})\overline{w}(1-A_2^2))\partial_{\overline{z}}+(A_1^2 A_2 z\overline{z}+A_1 A_2 \overline{z}^2+A_1 A_2 z^2+A_2 z\overline{z}+(A_2 w +\overline{w})^2)\partial_{\overline{w}}}
{A_1^2 z\overline{z}+A_2^2 w\overline{w}+A_1\overline{z}^2+A_1 z^2+b\overline{w}^2+A_2 w^2+w\overline{w}+z\overline{z}}.
\end{eqnarray*}
Let 
$$
H_0:=\frac{-3( A_1 A_2^2 {w}^{2}+ ( A_1^{2}{z}^{2}+
 ( 2w{ \overline{w}}+2z{ \overline{z}} ) A_1+{{ \overline{z}}}^{2} ) A_2+A_1
{{ \overline{w}}}^{2} ) } {( w{ \overline{w}}A_2^{2}+A_2 ( {{ \overline{w}}}^
{2}+{w}^{2} ) +A_1^{2}z{ \overline{z}}+A_1 ( {{ \overline{z}}}^{2}+{z}^{2}
 ) +w{ \overline{w}}+z{ \overline{z}} ) ^{5} ( \overline{w}+{ \overline{w}}
 )}.
 $$
We can now use those formulas to compute the Tresse invariant $H$ {\it on $M$}, Using
\eqref{Hformula} and also substituting $p=-\frac{\partial_z \rho}{\partial_w \rho}$, we  obtain with $a=A_1$ and $b=A_2$:
\begin{footnotesize}
\begin{multline*}
H=H_0\cdot
\Bigl [  ( {a}^{2}{{ \overline{w}}}^{4}{w}^{2}+4{{ \overline{w}}}^{2}{w}^
{4} ) {b}^{7}+{ \overline{w}} (  ( 5{{ \overline{w}}}^{3}{z}^{2}
+{ \overline{w}}{{ \overline{z}}}^{2}{w}^{2} ) {a}^{3}+2{ \overline{w}} ( 
{{ \overline{w}}}^{2}+{w}^{2} )  ( w{ \overline{w}}+5z{ \overline{z}} ) 
{a}^{2}\\
+ ( 5{{ \overline{w}}}^{3}{{ \overline{z}}}^{2}+ ( 8{w}^{2}{{
 \overline{z}}}^{2}+9{w}^{2}{z}^{2} ) { \overline{w}} ) a+8{w}^{5}+
16{{ \overline{w}}}^{2}{w}^{3}+8{ \overline{w}}{w}^{2}z{ \overline{z}} ) {b}^{6
}\\
+ (  ( -8wz{{ \overline{w}}}^{3}{ \overline{z}}+5{{ \overline{w}}}^{2}{{ 
\overline{z}}}^{2}{z}^{2} ) {a}^{4}+ ( 2w ( -3{{ \overline{z}}}^{2}
+{z}^{2} ) {{ \overline{w}}}^{3}+ ( 10{{ \overline{z}}}^{3}z+10{ 
\overline{z}}{z}^{3} ) {{ \overline{w}}}^{2}-8{w}^{3}{{ \overline{z}}}^{2}{ \overline{w}}
 ) {a}^{3}\\
 + ( {{ \overline{w}}}^{6}+2{w}^{2}{{ \overline{w}}}^{4}+32w
z{{ \overline{w}}}^{3}{ \overline{z}}+ ( 5{{ \overline{z}}}^{4}+20{z}^{2}{{ \overline{z}}
}^{2}+5{z}^{4} ) {{ \overline{w}}}^{2}+8{w}^{3}z{ \overline{w}}{ \overline{z}}
 ) {a}^{2}\\
 + (  ( 26{{ \overline{z}}}^{2}w+18w{z}^{2}
 ) {{ \overline{w}}}^{3}+ ( 10{{ \overline{z}}}^{3}z+10{ \overline{z}}{z}^
{3} ) {{ \overline{w}}}^{2}+ ( 24{{ \overline{z}}}^{2}{w}^{3}+16{w}^{
3}{z}^{2} ) { \overline{w}} ) a+24{w}^{2}{{ \overline{w}}}^{4}\\
+16wz{
{ \overline{w}}}^{3}{ \overline{z}}+ ( 5{z}^{2}{{ \overline{z}}}^{2}+32{w}^{4}
 ) {{ \overline{w}}}^{2}+24{w}^{3}z{ \overline{w}}{ \overline{z}}+4{w}^{6}
 ) {b}^{5}+ (  ( 5{w}^{2}{{ \overline{w}}}^{2}{{ \overline{z}}}^{2
}-8wz{ \overline{w}}{{ \overline{z}}}^{3} ) {a}^{5}\\
-8 ( z{{ \overline{w}}}
^{3}+{w}^{2}z{ \overline{w}}+w{ \overline{z}} ( {{ \overline{z}}}^{2}+{z}^{2}
 )  ) { \overline{w}}{ \overline{z}}{a}^{4}+ (  ( -7{{
 \overline{z}}}^{2}-18{z}^{2} ) {{ \overline{w}}}^{4}+ ( -44{w}^{2}{{
 \overline{z}}}^{2}-9{w}^{2}{z}^{2} ) {{ \overline{w}}}^{2}\\
 +8wz{ \overline{z}}
 ( {{ \overline{z}}}^{2}+{z}^{2} ) { \overline{w}}-9{w}^{4}{{ \overline{z}}}^{
2} ) {a}^{3}+ ( -2w{{ \overline{w}}}^{5}-18z{{ \overline{w}}}^{4}{
 \overline{z}}-6{w}^{3}{{ \overline{w}}}^{3}-6{w}^{2}z{{ \overline{w}}}^{2}{ \overline{z}}\\
 -2
w ( -8{{ \overline{z}}}^{4}-20{z}^{2}{{ \overline{z}}}^{2}+{w}^{4}-4{z}^{
4} ) { \overline{w}}-2{w}^{4}z{ \overline{z}} ) {a}^{2}+ ( 
 ( -2{{ \overline{z}}}^{2}+9{z}^{2} ) {{ \overline{w}}}^{4}+30{w}^{
2} ( 2{{ \overline{z}}}^{2}+{z}^{2} ) {{ \overline{w}}}^{2}\\
+ ( 32
{{ \overline{z}}}^{3}wz+24{ \overline{z}}w{z}^{3} ) { \overline{w}}+7{w}^{4}{
z}^{2}+16{w}^{4}{{ \overline{z}}}^{2} ) a+16w{{ \overline{w}}}^{5}+8z{{
 \overline{w}}}^{4}{ \overline{z}}+48{w}^{3}{{ \overline{w}}}^{3}+56{w}^{2}z{{ \overline{w}}}^
{2}{ \overline{z}}\\
+ ( 16{{ \overline{z}}}^{2}w{z}^{2}+16{w}^{5} ) {
 \overline{w}}+16{w}^{4}z{ \overline{z}} ) {b}^{4}+ (  ( {{ \overline{w}}
}^{2}{{ \overline{z}}}^{2}{z}^{2}+5{w}^{2}{{ \overline{z}}}^{4} ) {a}^{6}+
 ( 10w{{ \overline{w}}}^{3}{{ \overline{z}}}^{2}\\
 + ( -6{{ \overline{z}}}^{3}z-
8{ \overline{z}}{z}^{3} ) {{ \overline{w}}}^{2}+10{w}^{3}{{ \overline{z}}}^{2}{
 \overline{w}}+2{w}^{2}z{{ \overline{z}}}^{3} ) {a}^{5}+ ( 8wz{{ 
\overline{w}}}^{3}{ \overline{z}}+ ( -7{{ \overline{z}}}^{4}-44{z}^{2}{{ \overline{z}}}^{2}-
9{z}^{4} ) {{ \overline{w}}}^{2}\\
+8{w}^{3}z{ \overline{w}}{ \overline{z}}-9{w}
^{2}{{ \overline{z}}}^{2} ( 2{{ \overline{z}}}^{2}+{z}^{2} )  ) {
a}^{4}+ (  ( -40{{ \overline{z}}}^{2}w-24w{z}^{2} ) {{
 \overline{w}}}^{3}+ ( -40{{ \overline{z}}}^{3}z-40{ \overline{z}}{z}^{3}
 ) {{ \overline{w}}}^{2}\\
 -4{w}^{3} ( 10{{ \overline{z}}}^{2}+{z}^{2}
 ) { \overline{w}}-4{w}^{2}z{ \overline{z}} ( 6{{ \overline{z}}}^{2}+{z}^{
2} )  ) {a}^{3}+ ( -2{{ \overline{w}}}^{6}-12{w}^{2}{{
 \overline{w}}}^{4}-72wz{{ \overline{w}}}^{3}{ \overline{z}}\\
 + ( -4{{ \overline{z}}}^{4}-
29{z}^{2}{{ \overline{z}}}^{2}-10{w}^{4}-2{z}^{4} ) {{ \overline{w}}}^{2
}-32{w}^{3}z{ \overline{w}}{ \overline{z}}-{w}^{2} ( -15{{ \overline{z}}}^{4}-6
{z}^{2}{{ \overline{z}}}^{2}+{w}^{4}-2{z}^{4} )  ) {a}^{2}\\
+
\end{multline*}
\begin{multline*}
+ (  ( 18{{ \overline{z}}}^{2}w+12w{z}^{2} ) {{ \overline{w}}}^{3
}+ ( 2{{ \overline{z}}}^{3}z+4{ \overline{z}}{z}^{3} ) {{ \overline{w}}}^{
2}+ ( 38{{ \overline{z}}}^{2}{w}^{3}+12{w}^{3}{z}^{2} ) { 
\overline{w}}+12{w}^{2}{z}^{3}{ \overline{z}}+30{w}^{2}z{{ \overline{z}}}^{3} ) a\\
+4
{{ \overline{w}}}^{6}+32{w}^{2}{{ \overline{w}}}^{4}+40wz{{ \overline{w}}}^{3}{ \overline{z}
}+ ( 6{z}^{2}{{ \overline{z}}}^{2}+24{w}^{4} ) {{ \overline{w}}}^{2}+
40{w}^{3}z{ \overline{w}}{ \overline{z}}+15{w}^{2}{z}^{2}{{ \overline{z}}}^{2}
 ) {b}^{3}\\
 + ( {a}^{7}{z}^{2}{{ \overline{z}}}^{4}+10 ( {{
 \overline{z}}}^{2}+{z}^{2} )  ( w{ \overline{w}}+1/5z{ \overline{z}} ) 
{{ \overline{z}}}^{2}{a}^{6}+ ( 5{{ \overline{w}}}^{4}{{ \overline{z}}}^{2}+20{w}^
{2}{{ \overline{w}}}^{2}{{ \overline{z}}}^{2}+8wz{ \overline{z}} ( 4{{ \overline{z}}}^{
2}+{z}^{2} ) { \overline{w}}\\
+5{w}^{4}{{ \overline{z}}}^{2}+2{z}^{2}{{ 
\overline{z}}}^{4}+{{ \overline{z}}}^{6} ) {a}^{5}+ ( 16z{{ \overline{w}}}^{4}{
 \overline{z}}+40{w}^{2}z{{ \overline{w}}}^{2}{ \overline{z}}-2w ( 9{{ \overline{z}}}^{
4}+3{z}^{2}{{ \overline{z}}}^{2}+{z}^{4} ) { \overline{w}}-2z{{ \overline{z}}}^{5
}\\
-6{z}^{3}{{ \overline{z}}}^{3}+ ( 8{w}^{4}z-2{z}^{5} ) {
 \overline{z}} ) {a}^{4}+ (  ( -4{{ \overline{z}}}^{2}+15{z}^{2}
 ) {{ \overline{w}}}^{4}+ ( -29{w}^{2}{{ \overline{z}}}^{2}+6{w}^{2}{
z}^{2} ) {{ \overline{w}}}^{2}\\
+ ( -72{{ \overline{z}}}^{3}wz-32{ 
\overline{z}}w{z}^{3} ) { \overline{w}}-2{{ \overline{z}}}^{6}-12{z}^{2}{{ \overline{z}}}
^{4}+ ( -2{w}^{4}-10{z}^{4} ) {{ \overline{z}}}^{2}+2{w}^{4}
{z}^{2}-{z}^{6} ) {a}^{3}\\
+ ( -6w{{ \overline{w}}}^{5}+2z{{ 
\overline{w}}}^{4}{ \overline{z}}-14{w}^{3}{{ \overline{w}}}^{3}-54{w}^{2}z{{ \overline{w}}}^{2}{
 \overline{z}}-4 ( {w}^{4}+{z}^{4}+{\frac {27}{2}}{z}^{2}{{ \overline{z}}}^
{2}-1/2{{ \overline{z}}}^{4} ) w{ \overline{w}}\\
-4 ( {w}^{4}+{z}^{4}+7
/2{z}^{2}{{ \overline{z}}}^{2}+3/2{{ \overline{z}}}^{4} ) z{ \overline{z}}
 ) {a}^{2}+ (  ( 12{{ \overline{z}}}^{2}-2{z}^{2}
 ) {{ \overline{w}}}^{4}+3{w}^{2} ( 6{{ \overline{z}}}^{2}+{z}^{2}
 ) {{ \overline{w}}}^{2}\\
 +16wz{ \overline{w}}{{ \overline{z}}}^{3}-{{ \overline{z}}}^{2}
 ( 6{z}^{2}{{ \overline{z}}}^{2}+{w}^{4}+5{z}^{4} )  ) a
+16 ( w{ \overline{w}}+z{ \overline{z}} )  ( {{ \overline{w}}}^{2}{w}^{2}
+z{ \overline{w}}{ \overline{z}}w-1/8{z}^{2}{{ \overline{z}}}^{2}\\
+1/2{{ \overline{w}}}^{4}
 )  ) {b}^{2}+ (  (  ( 5{{ \overline{z}}}^{4}+8
{z}^{2}{{ \overline{z}}}^{2} ) {{ \overline{w}}}^{2}+9{w}^{2}{z}^{2}{{ 
\overline{z}}}^{2} ) {a}^{6}+ ( 10w{{ \overline{w}}}^{3}{{ \overline{z}}}^{2}+
 ( 26{{ \overline{z}}}^{3}z+24{ \overline{z}}{z}^{3} ) {{ \overline{w}}}^{
2}\\
+10{w}^{3}{{ \overline{z}}}^{2}{ \overline{w}}+16{w}^{2}{z}^{3}{ \overline{z}}+18{w
}^{2}z{{ \overline{z}}}^{3} ) {a}^{5}+ ( 32wz{{ \overline{w}}}^{3}{ 
\overline{z}}+ ( -2{{ \overline{z}}}^{4}+60{z}^{2}{{ \overline{z}}}^{2}+16{z}^{4}
 ) {{ \overline{w}}}^{2}\\
 +24{w}^{3}z{ \overline{w}}{ \overline{z}}+7{w}^{2}
 ( {z}^{4}+{\frac {30}{7}}{z}^{2}{{ \overline{z}}}^{2}+{\frac {9}{7}}
{{ \overline{z}}}^{4} )  ) {a}^{4}+ (  ( 2{{ \overline{z}}
}^{2}w+30w{z}^{2} ) {{ \overline{w}}}^{3}+ ( 18{{ \overline{z}}}^{3}z
+38{ \overline{z}}{z}^{3} ) {{ \overline{w}}}^{2}\\
+ ( 4{{ \overline{z}}}^{2
}{w}^{3}+12{w}^{3}{z}^{2} ) { \overline{w}}+12{w}^{2}z{ \overline{z}}
 ( {{ \overline{z}}}^{2}+{z}^{2} )  ) {a}^{3}+ ( -6{w
}^{2}{{ \overline{w}}}^{4}+16wz{{ \overline{w}}}^{3}{ \overline{z}}+ ( 12{{ \overline{z}}
}^{4}\\
+18{z}^{2}{{ \overline{z}}}^{2}-5{w}^{4}-{z}^{4} ) {{ \overline{w}}}^
{2}+3{w}^{2}{z}^{2}{{ \overline{z}}}^{2}-2{w}^{2}{{ \overline{z}}}^{4} ) {
a}^{2}-2 ( w{ \overline{w}}+z{ \overline{z}} )  (  ( -8{{
 \overline{z}}}^{2}+{z}^{2} ) {{ \overline{w}}}^{2}+{w}^{2}{{ \overline{z}}}^{2}
 ) a\\
 +4{{ \overline{w}}}^{2} ( w{ \overline{w}}+z{ \overline{z}} ) ^{2}
 ) b+4 ( {a}^{6}{z}^{4}{{ \overline{z}}}^{2}+ ( 2{ \overline{w}}
{{ \overline{z}}}^{2}w{z}^{2}+4{z}^{3}{{ \overline{z}}}^{3}+2{z}^{5}{ \overline{z}}
 ) {a}^{5}+ ( 5/4{w}^{2}{{ \overline{w}}}^{2}{{ \overline{z}}}^{2}\\
 +
 ( 4{{ \overline{z}}}^{3}wz+6{ \overline{z}}w{z}^{3} ) { \overline{w}}+{z}
^{2} ( 6{{ \overline{z}}}^{4}+8{z}^{2}{{ \overline{z}}}^{2}+{z}^{4}
 )  ) {a}^{4}+ ( 4{w}^{2}z{{ \overline{w}}}^{2}{ \overline{z}}+4
 ( {z}^{4}+7/2{z}^{2}{{ \overline{z}}}^{2}+1/2{{ \overline{z}}}^{4}
 ) w{ \overline{w}}\\
 +4{z}^{5}{ \overline{z}}+12{z}^{3}{{ \overline{z}}}^{3}+4z{{
 \overline{z}}}^{5} ) {a}^{3}+ ( {\frac {15}{4}}{w}^{2} ( {
z}^{2}+2/5{{ \overline{z}}}^{2} ) {{ \overline{w}}}^{2}+10wz{ \overline{z}}
 ( {{ \overline{z}}}^{2}+{z}^{2} ) { \overline{w}}+6{z}^{4}{{ \overline{z}}}^{
2}+8{z}^{2}{{ \overline{z}}}^{4}+{{ \overline{z}}}^{6} ) {a}^{2}\\
-1/2
 ( w{ \overline{w}}+z{ \overline{z}} )  ( {{ \overline{w}}}^{2}{w}^{2}-8z
{ \overline{w}}{ \overline{z}}w-4{{ \overline{z}}}^{4}-8{z}^{2}{{ \overline{z}}}^{2}
 ) a+{{ \overline{z}}}^{2} ( w{ \overline{w}}+z{ \overline{z}} ) ^{2}
 ) a \Bigr],
\end{multline*}
\end{footnotesize}

This formula for $H$ (restricted on $M$) was computed by employing the MAPLE package. As a consequence of the formula and the fact that $H=0$ is the condition for sphericity of a hypersurface $M$ (see e.g. the discussion in  \cite{nonanalytic,KZ2} for details), we obtain the following result

\begin{lemma}\label{spherical}
An ellipsoid $M$ as above is never spherical, unless $A_1=A_2=0$ holds.
\end{lemma}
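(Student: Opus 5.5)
The plan is to use the criterion just recalled: $M$ is spherical if and only if the Tresse invariant $H$ vanishes identically on $M$. Since $H=H_0\cdot[\cdots]$ and $H_0$ is a nonzero rational function on $M$ whenever $(A_1,A_2)\neq(0,0)$, the question reduces to whether the bracketed polynomial $P(z,w,\zbar,\overline{w};a,b)$, with $a=A_1$, $b=A_2$, can vanish identically on the ellipsoid
\[
M=\{z\zbar+w\overline{w}+\tfrac{a}{2}(z^2+\zbar^2)+\tfrac{b}{2}(w^2+\overline{w}^2)=1\}.
\]
The case $a=b=0$ is the ball, which is spherical. So I assume $(a,b)\neq(0,0)$ and aim to show that $P|_M\not\equiv 0$.

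\textbf{Step 1: restrict to the points $z=0$.} Here $M$ reduces to the closed curve
\[
C=\{w\overline{w}+\tfrac{b}{2}(w^2+\overline{w}^2)=1\},
\]
and at such points $H$ is computed by the same formula with $z=\zbar=0$. Setting $z=\zbar=0$ in $P$ kills most terms. The surviving terms are those with no $z,\zbar$ factor, for example the $b^7$ terms $(a^2\overline{w}^4w^2+4\overline{w}^2w^4)$, the $b^6$ terms $\overline{w}(2\overline{w}(\overline{w}^2+w^2)w\overline{w}\,a^2+8w^5+16\overline{w}^2w^3)$, and so on. The result is a polynomial $Q(w,\overline{w};a,b)$.

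\textbf{Step 2: parametrize $C$ and argue algebraically.} I would parametrize $C$ by $w=re^{i\phi}$ with $r^2(1+b\cos 2\phi)=1$ and write $Q$ as a trigonometric polynomial in $\phi$. Alternatively, I would use that $C$ is Zariski dense in the complexified conic $\{w\overline{w}+\tfrac b2(w^2+\overline{w}^2)=1\}$, viewed as a conic in the independent variables $(w,\overline{w})$. On that conic I would reduce $Q$ modulo the conic equation and extract a coefficient that is visibly nonzero.

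A simpler test is to evaluate at specific points of $C$. For $b>0$, the point $w=\overline{w}=1/\sqrt{1+b}$ (real $w$) gives a polynomial identity in $a,b$. The point $w=i/\sqrt{1-b}$ gives another. Requiring both to vanish, together with the top-degree-in-$b$ coefficient, should force $a=b=0$.

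\textbf{Step 3: handle the degenerate cases separately.} If $b=0$, only the final $a\cdot[\cdots]$ block of $P$ survives, and it contains $a\,\zbar^2(w\overline{w}+z\zbar)^2$. Restricting to $z=0$ leaves $a\cdot(\text{terms in } w,\overline{w})$. If that restriction vanishes, I would instead use the points $w=0$ and the symmetric curve in $z$. The roles of $z,w$ (and of $a,b$) are symmetric for the ellipsoid. So I can assume without loss of generality that the larger parameter is attached to the variable I restrict along, and the $w=0$ slice then handles the case $b=0$, $a\neq 0$.

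\textbf{Main obstacle.} The main obstacle is bookkeeping: correctly extracting the $z=\zbar=0$ (or $w=\overline{w}=0$) slice of the long MAPLE expression, and checking that the resulting one-variable trigonometric polynomial has a coefficient that is a sum of positive terms in $a,b$. Such a coefficient would be nonzero unless $a=b=0$, much like the constant term in the first proof. I would first try the real point on $C$, where all terms become real monomials with mostly positive coefficients, and hope that positivity already suffices.
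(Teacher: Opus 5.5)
Your route differs from the paper's and can be completed, but as written the decisive step is only asserted. The paper works on all of $M$ at once. It treats the numerator of $H$ as a polynomial in $\overline{w}$, reduces it modulo $\rho$, and checks that the lowest-order part of the remainder vanishes only for $a=b=0$. You instead divide off $H_0$, which is legitimate because $H_0\not\equiv 0$ on the connected hypersurface $M$, and then test the bracket $P$ on the slice $\{z=0\}$.

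Carrying this out from the displayed formula, only the $a^0$ and $a^2$ parts survive at $z=\bar z=0$, and they factor as
\[
P|_{z=0}=(bw+\bar w)^2\Bigl[4b\,(bw+\bar w)^2(w+b\bar w)^2+a^2\,T(w,\bar w)\Bigr],
\]
where
\[
T=-bw^4-2(1+b^2)w^3\bar w-6b\,w^2\bar w^2+2b^2(b^2-3)\,w\bar w^3+b^3(b^2-2)\,\bar w^4 .
\]
At your real point $w=1/\sqrt{1+b}$ this equals $(1+b)^3\bigl(4b-(2-b)a^2\bigr)$. That vanishes on the curve $a^2=4b/(2-b)$, which lies in the admissible range for $0<b<2/5$, so the positivity you hoped for there fails. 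At $w=i/\sqrt{1-b}$ it equals $-(1-b)^3\bigl(4b+(2+b)a^2\bigr)$. This is nonzero unless $a=b=0$, and this single value already proves the lemma. Without this computation, or an equivalent one, your argument stops at ``should force''.

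Your Step 3 is unnecessary: for $b=0$ the slice gives $P=-2a^2|w|^6=-2a^2$ on $C$. As phrased it would not work anyway. The displayed formula comes from the ODE with $w$ as dependent variable and has $\rho_w=\bar w+bw$ in its denominators, so it cannot be evaluated on $\{w=0\}$. Swapping $z\leftrightarrow w$ is fine at the level of the statement, but then $P$ must be recomputed for $E(b,a)$.

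Removing $H_0$ before restricting is essential, not cosmetic. On $\{z=0\}$ the numerator of $H_0$ equals $-3a(bw+\bar w)^2$. So for $a=0$ the invariant $H$ itself vanishes identically on that slice, because the circle $z\mapsto e^{it}z$ then fixes the slice pointwise and forces the Cartan curvature to vanish there.

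Compared with the paper, your approach gives a one-point certificate that uses only the short $z$-free part of $P$. The paper's remainder computation is heavier, but it gives an algebraic criterion on all of $M$ and does not depend on choosing a good slice or point.
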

\begin{proof}
We analyze the numerator of the above expression for $H$, considering it as a polynomial in $\bar w$ (with coefficients being polynomials in $z,\bar z,w$), and compute the remainder  for its division by $\rho$ (again considering the latter as a polynomial in $\bar w$). 
The latter remainder (which has to vanish identically in the case of identical vanishing of $H$ on $M$) has the lowest degree term (with $A_1=a, A_2=b$)
\begin{eqnarray}\label{leading}
&&\quad {(96a^3-192a)b^3-48a^3b^5\over b^4}\\
&&\quad+\frac{(264 a^4+96 a^2-96) b^5
+(-120 a^6-96 a^4+96 a^2) b^3-144 a^2 b^7} {b^4}\overline{z}^2 \nonumber \\
&&\quad+\frac{(-552 a^4+432 a^2) b^3-144 a^4 b^7+(576 a^4-312 a^2) b^5}{b^4}z^2.\nonumber
\end{eqnarray} 
It is straightforward to check that \eqref{leading} vanishes identically if and only if $a=b=0$, since $A_1=a$ and $A_2=b$,  this proves the lemma.
\end{proof}

Finally, we have to compute the obstruction function $\Omega^4_{2,0}=(\partial_p)^2H$ (using the MAPLE package), but due to its length we will not present the full outcome here. Arguing as in the proof of Lemma \ref{spherical}, we see that it's sufficient for our purposes to consider the numerator of the rational expression for $\Omega^4_{2,0}$, and analyze the remainder for its division by $\rho$, where both the numerator and the defining function $\rho$ are considered as polynomials in $\overline{w}$. Assuming the identical vanishing of the obstruction function $\Omega^4_{2,0}$, we obtain the identical vanishing of the remainder. Considering the lowest degree terms in $\bar w$ for the latter one, we get with $A_1=a$ and $A_2=b$:
\begin{eqnarray}\label{latter}
{2\over b^2} f_1(a,b) w+{2\over b} f_2(a,b) \overline{w} +{a\over b}  f_3(a,b)z^2\overline{w}=0,
\end{eqnarray}
where
{\small\smaller{
\begin{eqnarray*}
f_1(a,b)&=&-84a^2b^6-(-165a^4-120a^2+72)b^4\\
&&-(90a^6+174a^4+4a^2-64)b^2-(-80a^6-24a^4+32a^2),
\end{eqnarray*}
$$ f_2(a, b)=(10a^6-21a^4b^2+12a^2b^4+6a^4-8a^2b^2-4a^2+8b^2)
$$
and
\begin{eqnarray*}
f_3(a, b)&=& 135a^6b^2-198a^4b^4+60a^2b^6-280a^6+453a^4b^2\\
&&-156a^2b^4+57a^4-154a^2b^2+60b^4+64a^2-32b^2-12.
\end{eqnarray*}
\normalsize
The equation (\ref{latter}) can hold true only if $a=b=0$. This completely proves Theorem \ref{main} and  the Ramadanov Conjecture for ellipsoids in $\CC^{2}$.

\bigskip

\begin{center}
\bf Acknowledgements
\end{center}

\bigskip

The first author was supported by the  FWF grant 10.55776/PAT4819724. The second author was supported by the FWF grant P34369, and the NSFC grant K24281006.

 \end{document}